\def\set@curr@file#1{%
  \begingroup
    \escapechar\m@ne
    \xdef\@curr@file{\expandafter\string\csname #1\endcsname}%
  \endgroup
}
\def\quote@name#1{"\quote@@name#1\@gobble""}
\def\quote@@name#1"{#1\quote@@name}
\def\unquote@name#1{\quote@@name#1\@gobble"}
\DeclareRobustCommand{\mychar}[1]{%
  \begingroup\normalfont
  \includegraphics[height=\fontcharht\font`\B]{#1}%
  \endgroup
} 
\theoremstyle{plain}
\newtheorem{thm}{Theorem}[section]
\newtheorem{lem}[thm]{Lemma}
\newtheorem{prop}[thm]{Proposition}
\theoremstyle{definition}
\theoremstyle{remark}
\newcommand{\Rr}{\mathbb{R}}
\newcommand{\Zz}{\mathbb{Z}}
\newcommand{\Nn}{\mathbb{N}}
\newcommand{\etal}{\textit{et al}. }
\DeclareMathOperator*{\argmax}{arg\,max}
\DeclareMathOperator*{\BR}{BR}
\begin{document}

\title[Periodic attractor in the RPS game]
{Periodic attractor in the discrete time best-response dynamics of the Rock-Paper-Scissors game}

\subjclass[2010]{34A36, 91A22, 34A60, 39A28}
\keywords{Best response dynamics, bifurcations, discretization, fictitious play, periodic orbits, rock-paper-scissors game.}

\author[Gaiv\~ao]{Jos\'e Pedro Gaiv\~ao}
\address{Departamento de Matem\'atica and CEMAPRE, REM, ISEG\\
Universidade de Lisboa\\
Rua do Quelhas 6, 1200-781 Lisboa, Portugal}
\email{jpgaivao@iseg.ulisboa.pt}

\author[Peixe]{Telmo Peixe}
\address{Departamento de Matem\'atica and CEMAPRE, REM, ISEG\\
Universidade de Lisboa\\
Rua do Quelhas 6, 1200-781 Lisboa, Portugal}
\email{telmop@iseg.ulisboa.pt}

\date{\today}  

\begin{abstract}
The Rock-Paper-Scissors (RPS) game is a classic non-cooperative game widely studied in terms of its theoretical analysis as well as in its
applications, ranging from sociology and biology to economics.
Many experimental results of the RPS game indicate that this game is better modelled by the discretized best-response dynamics rather than continuous time dynamics. In this work we show that the attractor of the discrete time best-response dynamics of the RPS game is finite and periodic.
Moreover we also describe the bifurcations of the attractor and determine the exact number, period and location of the periodic strategies.
\end{abstract}
 
\maketitle

\section{Introduction}\label{sec:intro}

The widely known Rock-Paper-Scissors (RPS) game consists of two players, each one throwing one hand forward making one of three possible symbols:
\begin{itemize}
	\item[(R)] Rock, represented by the closed hand;
	\item[(P)] Paper, represented by the open hand; and
	\item[(S)] Scissors, represented by the closed hand with exactly two fingers extended.
\end{itemize}
At each turn the players compare the symbols represented by their hands and decide who wins as follows:
\begin{itemize}
	\item Paper (P) wins Rock (R);
	\item Scissors (S) wins Paper (P); and
	\item Rock (R) wins Scissors (S);
\end{itemize}
forming a dominance cycle as depicted in Figure~\ref{RPS_cycle_dominance}.
It is considered a draw if both players make the same symbol.

\begin{figure}[h]
\centering{\includegraphics[width=6cm]{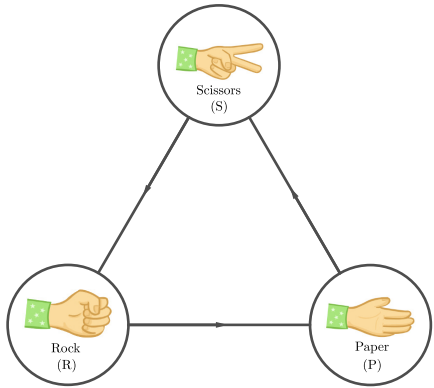}}
\caption{Cyclic dominance in the RPS game.}\label{RPS_cycle_dominance}
\end{figure}

The RPS game is thus a game with three pure strategies which in its normal form can be represented by the payoff matrix
\begin{equation}\label{RPS_payoff_matrix}
A=\begin{pmatrix}
0&-b&a\\
a&0&-b\\
-b&a&0
\end{pmatrix}, \quad a,b>0.
\end{equation}
The most commonly known version of this game is the symmetric case, where what the players win is the same as what they lose (called a zero-sum game), and which can be represented by the payoff matrix $A$ with $a=b=1$. In the case where $a>b$ we say that the game is \textit{favourable}, since in this situation what the players can win is greater than what they can loose. In the other case where $a<b$, we say that the game is \textit{unfavourable} because what the players can win is less than what they can loose.
A game with a more general payoff matrix where all $a$'s and $b$'s are different is analyzed in~\cite{GauHof1995}.

The RPS game is often used as a model for studying the evolution of competitive strategies (non-cooperative games) in dominance cycles~\cite{SMJSRP2014}.
Namely, in the theoretical economics field this game has been used to qualitatively study price dynamics~\cite{HopSey2002}.
A widely studied case of cyclical dominance in economics is the designated~\textit{Edgworth price cycle}~\cite{MasTir1988} which describes the cyclical pattern of price changes in a given market, such as the retail price cycles in the gasoline market~\cite{Noel2007}.

In evolutionary game theory we can study the evolution of a given game using different models,
such as the replicator equation or the best-response function.
Suppose that in a given population some individuals have the capability to change their strategy at any time,
switching to a strategy that is the best-response to their opponents' current strategy.
A function that models this situation is the classical~\textit{Brown-Robinson procedure},
or~\textit{fictitious play}, initially studied by G. Brown~\cite{Brown1951} and J. Robinson~\cite{Rob1951}.
Brown studied two versions of the~\textit{fictitious play} model, the discrete and the continuous time.
The continuous time version, up to a time re-parametrisation that only affects the velocity of the motion,  is given by the differential inclusion
$$
x'(t) \in \BR(x(t))-x(t),
$$
where $ BR(x(t))$ is the best-response to the strategy $x(t)$ at time $t$.
This is the designated~\textit{Best-Response dynamics} (BR)~\cite{Matsui1992}.

For the BR dynamics of the RPS game, by~\cite[Theorem 2]{GauHof1995} we have that when $a\ge b$ (favourable and zero-sum games), the Nash equilibrium is globally attracting, and when $a <b$ (unfavourable game), the Nash equilibrium is repelling and the global attractor is a periodic orbit known as the~\textit{Shapley triangle}.

Suppose now that in a short period of time $\varepsilon$ a small fraction $\varepsilon$ of randomly chosen people from the population
can change their strategy for a better strategy $\BR(x(t))$ relative to the current state $x(t)$ of the population.
In this case we have
\begin{equation}\label{disc_BR}
x(t+ \varepsilon ) \in (1- \varepsilon )x(t)+ \varepsilon \BR(x(t)),
\end{equation}
which is a discretization of the BR dynamics.
This discretized version has been studied by several other authors, e.g., J. Hofbauer and its collaborators~\cite{HS2006, BHS2012, BH2017},
D. Monderer~\etal\cite{MSS1997}, C. Harris~\cite{Har1998}, and V. Krishna and T. Sj\"{o}str\"{o}m~\cite{KS1998}.

Many experiments (e.g.,~\cite{SKM2003, CFH2014, WXZ2014}) on the evolution of the choices of strategies that each person makes as they play the RPS game evidence a dynamics that is well modelled by the discrete time best-response rather than the continuous time version.

The work we present here was also motivated by the paper of P. Bednarik and J. Hofbauer~\cite{BH2017} where they study the discretized best-response dynamics for the RPS game.
They focus on the symmetric case with $a=b=1$, making in the end some extension to the general case $a,b>0$.
In their main result, they prove that the attractor is contained in an annulus shaped triangular region and find a family of periodic strategies inside that region. 

In this paper we also study the discretized best-response dynamics for the RPS game.
We consider the general case $a,b>0$ and our main result says that the attractor of the discretized best-response dynamics for the RPS game is made of a finite number of periodic strategies, as stated in the following theorem.

\begin{thm}\label{th:main}
The attractor of the discretized best-response dynamics of the rock-paper-scissors game is finite and periodic, i.e., every strategy converges to a periodic strategy and there are at most a finite number of them.
\end{thm}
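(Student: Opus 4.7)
My strategy is to reduce the two-dimensional discrete dynamics to a one-dimensional first-return map on a transverse section and then exploit the resulting piecewise-affine contracting structure.

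\textbf{Setup and geometry.} I would first fix a tie-breaking rule so that $T(x) = (1-\varepsilon)x + \varepsilon \BR(x)$ is a single-valued map on the simplex $\Delta^2$. The simplex partitions into three closed best-response regions $R_1, R_2, R_3$ meeting at the Nash equilibrium $x^* = (1/3,1/3,1/3)$, and on each $R_i$ the map is the affine contraction $T_i(x) = (1-\varepsilon)x + \varepsilon e_i$ toward the $i$-th vertex, with uniform ratio $1-\varepsilon$. Thus $T$ is a piecewise affine contraction of $\Delta^2$ with three pieces, and its geometry near $x^*$ is the familiar spiral picture of best-response dynamics.

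\textbf{Invariant annulus.} Next I would produce a compact forward-invariant triangular annulus $\Omega \subset \Delta^2$ that attracts every orbit of $T$, bounded away from both $x^*$ and $\partial \Delta^2$; on $\Omega$ the iterates visit $R_1, R_2, R_3$ in cyclic order and the dwell time in each region is locally constant and uniformly bounded. This step generalizes the geometric picture from \cite{BH2017} beyond the symmetric case $a=b=1$ to arbitrary $a,b>0$, by tracking how the three boundary rays between best-response regions are moved by the contractions $T_i$.

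\textbf{First-return map.} I would then choose a Poincar\'e section $\Sigma$---say the portion of the line separating $R_1$ and $R_2$ that lies inside $\Omega$---parametrised by an interval $I$. The first-return map $F\colon I\to I$ evaluated at $y\in I$ has the form $T_3^{k_3}\circ T_2^{k_2}\circ T_1^{k_1}$, where the exponents $k_i=k_i(y)$ are locally constant and bounded. Hence $I$ decomposes into finitely many subintervals on each of which $F$ is affine with contraction ratio $(1-\varepsilon)^{k_1+k_2+k_3}\le (1-\varepsilon)^3<1$. So $F$ is a piecewise affine contraction of an interval with finitely many pieces and a uniform contraction factor.

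\textbf{Conclusion and main obstacle.} From here the theorem follows by invoking the structural result that every orbit of an injective piecewise affine contraction of an interval with finitely many pieces converges to one of finitely many periodic orbits; lifting these periodic orbits back to $\Delta^2$ through the first-return construction produces the finite periodic attractor of $T$. The main obstacle is twofold: (i) rigorously identifying $\Omega$ and bounding the dwell times uniformly, which requires separate care in the favourable ($a\geq b$) and unfavourable ($a<b$) regimes, since in the unfavourable case the orbit spirals outward from $x^*$ toward the Shapley triangle while in the favourable case it spirals inward and one must show it cannot accumulate on $x^*$ under the discrete map; and (ii) ruling out Cantor-like $\omega$-limit sets, which are possible for general piecewise interval contractions—this should follow here from the common contraction factor $1-\varepsilon$ of all three branches of $T$ together with an injectivity/gap-free property of $F$ on $\Omega$.
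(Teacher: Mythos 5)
Your proposal contains one structural misstep and one fatal gap. The structural one: a codimension-one Poincar\'e section does not work for a discrete-time map. An orbit of $T$ jumps across the line separating $R_1$ from $R_2$ without ever landing on it, so your first-return map $F\colon I\to I$ on an interval parametrising that line is simply not defined. The workable fix, and what the paper actually does, is to induce on a \emph{full-dimensional} region: after quotienting by the cyclic symmetry $S(x_1,x_2,x_3)=(x_2,x_3,x_1)$ one obtains a map $f$ on $R_1$, and the first-return map $P$ to the two-dimensional strip $B\subset R_1$ (characterized by $T(B)\subset R_2$) has finitely many affine branches
$$
P|_{B_k}(x)=\lambda^k S(x)+\lambda^{k-1}(1-\lambda)e_1+(1-\lambda^{k-1})e_2,
$$
with bounded return time (your ``bounded dwell time'', Lemma~\ref{lem:return time}; the branch formula is Lemma~\ref{lem:Bk and P}). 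These branches are planar spiral contractions --- a contraction composed with the rotation $S$ by $2\pi/3$ --- and nothing in the geometry produces an invariant one-dimensional structure, so your reduction to an interval map is not merely unproved but unavailable; the problem stays genuinely two-dimensional. (Your invariant annulus and the cyclic visiting order are fine but turn out to be unnecessary: itinerary stabilization, below, does that work automatically.)

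The fatal gap is your concluding step. The ``structural result'' you invoke --- that every injective piecewise affine contraction of an interval with finitely many pieces is asymptotically periodic --- is false. The contracted rotation $x\mapsto \lambda x+b \pmod 1$ with $\lambda,b\in(0,1)$, $\lambda+b>1$, is exactly such a map: injective, two affine branches, \emph{common} contraction factor $\lambda$, and its global attractor is a Cantor set whenever its rotation number is irrational (the paper itself recalls this in Section~\ref{sec:Conclus}). This same example shows your proposed remedy for obstacle (ii) --- common ratio $1-\varepsilon$ plus injectivity --- cannot rule out Cantor-like $\omega$-limit sets, since contracted rotations have both properties. Nor can you appeal to genericity results for piecewise contractions (Bruin--Deane and their one-dimensional analogues), because the theorem is claimed for \emph{all} parameters $(\alpha,\lambda)$, not almost all. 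This is precisely where the paper's real work lies: it proves (Proposition~\ref{prop:monotone}, via the three monotonicity lemmas of Appendix~\ref{sec:MonotLems}) that the itinerary $i_k=n(P^k(x))$ of every regular point is eventually monotone, hence eventually constant since it takes finitely many values; the orbit therefore eventually stays in a single branch domain $B_p$ and converges to the unique fixed point $w_p$ of the affine contraction $P|_{B_p}$, and these finitely many fixed points yield the finite periodic attractor (each giving a $T$-periodic orbit of period $3p$ via the symmetry, Lemma~\ref{le:periodic}). Without a substitute for this itinerary-stabilization argument, tailored to the specific family at hand, your proof does not close.
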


We also describe in detail the bifurcations of the attractor and determine the exact number, period and location of the periodic strategies. See Theorem~\ref{thm:bif} for a full description. For instance, we show that every periodic strategy has a period which is a multiple of 3 and the attractor is formed by a chain of such periodic strategies enumerated by their period, i.e., with consecutive periods. Denote by $N(\varepsilon)$ the number of distinct periodic strategies (which also depends on $a$ and $b$). In the zero-sum game ($a=b$), we show that $N(\varepsilon)$ grows without bound as $\varepsilon\to0$. In fact,
$$
N(\varepsilon)=\left\lceil\frac{\log\left(\frac{2+\varepsilon-\sqrt{3\varepsilon(4-\varepsilon)}}{2(1-\varepsilon)}\right)}{\log(1-\varepsilon)}\right\rceil-1\quad\text{and}\quad \lim_{\varepsilon\to0}N(\varepsilon)=\infty.
$$
Moreover, the attractor is formed by the union of periodic strategies having periods $3,6,\ldots,3N(\varepsilon)$,  converging to the Nash equilibrium of the game.

In the non-zero-sum game ($a\neq b$) we show that the number $N(\varepsilon)$ of periodic strategies stays bounded as $\varepsilon\to0$. Although more complicated, similar formulas for $N(\varepsilon)$ are known in the non-zero-sum case. For instance, when $a>b$ (favourable game), we know that
$$\lim_{\varepsilon\to0}N(\varepsilon)=\left\lceil\frac{3a}{a-b}\right\rceil-1.$$ 
This fact is rather surprising as the authors of \cite[pag. 84]{BH2017} write: "Overall, if $a>b$, the dynamics behaves qualitatively very similarly to the case
$a = b$ : more and more periodic orbits emerge, as $\varepsilon\to0$". 
When $a<b$  (unfavourable game), the number $N(\varepsilon)$ has no limit as $\varepsilon\to0$, i.e., it alternates between the integers $\left\lceil\frac{b}{b-a}\right\rceil-1$ and $\left\lceil\frac{b}{b-a}\right\rceil$ for every $\varepsilon$ sufficiently small. Unlike before, in this case the periods of the periodic strategies grow like $3\log(b/a)/\varepsilon$ as $\varepsilon\to 0$\footnote{In fact, the smallest period is $3\left(\left\lfloor\frac{\log(a/b)}{\log(1-\varepsilon)}\right\rfloor +1\right)\sim \frac{3\log(b/a)}{\varepsilon}$ as $\varepsilon\to0$.}. 
 Moreover, when the game is favourable ($a>b$), the attractor converges to the Nash equilibrium, whereas in the unfavourable case ($a<b$), the attractor converges to the Shapley triangle. 

The rest of the paper is organized as follows. In Section~\ref{sec:prelim} we introduce the best-response function and define the map $T$ whose dynamics we a\-na\-ly\-se in this paper. We also introduce the general Rock-Paper-Scissors game.
In Section~\ref{sec:ReducSym}, using the symmetry of the game we reduce the dynamics to a subregion of the phase space.
In Section~\ref{sec:PoincMap} we define a Poincar\'e map and prove some of its properties in Appendix~\ref{sec:MonotLems}.
In Section~\ref{sec:MainResProof} we prove Theorem~\ref{th:main} using results from the previous Sections~\ref{sec:ReducSym} and~\ref{sec:PoincMap}.
In Section~\ref{sec:PerOrbBif} we describe in detail the bifurcations of the attractor and in Section~\ref{sec:Conclus} we discuss related work and some open problems.


\section{Preliminaries}\label{sec:prelim}

\subsection{Best-Response dynamics}
A population game with $d\geq2$ pure strategies is determined by a payoff matrix $A=(a_{i,j})\in \Rr^{d\times d}$ where $a_{i,j}$ is the payoff of the population playing the pure strategy $i$ against the pure strategy $j$. A mixed strategy is a probability vector $x\in\Delta$ where $\Delta$ denote the $(d-1)$-dimensional simplex,
$$
\Delta=\left\{(x_1,\ldots,x_d)\in\Rr^d\colon x_1+\cdots+x_d=1,\quad x_i\geq0\right\}.
$$
The best-response against $y\in\Delta$ is
$$
\BR(y)=\argmax_{x\in\Delta}x^\top Ay.
$$
The best-response is a multivalued map defined on $\Delta$. Indeed, on the indifferent sets (see Figure~\ref{RPS_regionsR_linesGamma})
$$
\Gamma_{i,j}=\{y\in\Delta\colon (Ay)_i=(Ay)_j\geq (Ay)_k,\quad \forall\,k=1,\ldots,d,\quad i\neq j \}\,,
$$
the $\BR$ takes any value in the convex combination of the canonical vectors $e_i$ and $e_j$. On the complement set $\Delta\setminus \Gamma$ where $\Gamma=\bigcup_{i\neq j}\Gamma_{i,j}$, the best-response $\BR$ is piecewise constant, i.e., takes the single value $e_i$ in the interior of $\BR^{-1}(\{e_i\})$ relative to $\Delta$. 

Let $T\colon\Delta\setminus\Gamma\to\Delta$ denote the map
$$
T(x)=\lambda x + (1-\lambda)\BR(x),
$$
where $\lambda\in (0,1)$. This map is deduced from~\eqref{disc_BR} considering $\lambda=1-\varepsilon$.


In this paper we are interested in the dynamics of the map $T$. To that end we need to introduce some terminology borrowed from dynamical systems with discontinuities. A point $x\in\Delta$ is called \textit{regular} if $T^n(x)\notin\Gamma$ for every $n\geq0$. The set of regular points is denoted by $\hat{\Delta}$. For a generic payoff matrix $A$, the set of regular strategies $\hat{\Delta}$ equals the set $\Delta$ except for a countable number of co-dimension one hyperplanes. Thus, $\hat{\Delta}$ is a full measure (with respect to Lebesgue measure on $\Delta$) and residual subset of $\Delta$. From a dynamical systems point of view, we aim to describe the orbits of almost every point in $\Delta$. This is the main reason for considering $T$ as a piecewise single-valued map and not a multi-valued correspondence on the whole $\Delta$.

Given $x\in\hat{\Delta}$ we denote by $O_T(x)$ its orbit, i.e., the sequence $O_T(x)=\{T^n(x)\}_{n\geq0}$. We say that a $x\in\hat{\Delta}$ is \textit{periodic} with period $p\in\Nn$ if $T^p(x)=x$ and $T^k(x)\neq x$ for every $1\leq k<p$. The $\omega$-limit set of a regular point $x\in\hat{\Delta}$ is the set of limit points of its orbit. The \textit{attractor} of $T$, which we denote by $\Lambda$, is the closure of the union of $\omega(x)$ over all $x\in\hat{\Delta}$. We say that $T$ is \textit{asymptotically periodic} if it has at most a finite number of periodic regular points and the $\omega$-limit set of every regular point is a periodic orbit. In other words, $T$ is asymptotically periodic whenever its attractor $\Lambda$ is finite and periodic.

\subsection{Rock-Paper-Scissors game}Let $A$ be the payoff matrix of the RPS game introduced in Section~\ref{sec:intro}. Define 
$$
\alpha:=\frac{a}{b},
$$
where $a$ and $b$ are the parameters of the payoff matrix $A$. Notice that $\alpha>0$ for every $a,b>0$. The symmetric case $a=b$ corresponds to $\alpha=1$.


\begin{figure}[h]
\centering{\includegraphics[width=7cm]{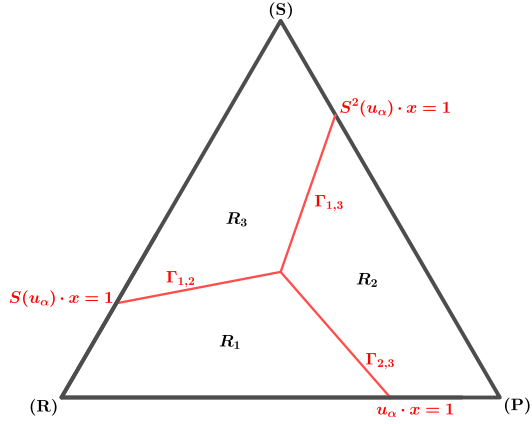}}
\caption{The simplex $\Delta$ with the regions $R_i$ and the indifference sets $\Gamma_{i,j}$.}
\label{RPS_regionsR_linesGamma}
\end{figure}

The domain $\Delta\setminus\Gamma$ of the map $T$ is the union of three disjoint regions (see Figure~\ref{RPS_regionsR_linesGamma})
\begin{align*}
R_1&= \left \{ x\in\Delta \colon x_1>\tfrac{(\alpha-1) x_2+1}{\alpha+2} \land x_3<\tfrac{(\alpha-1) x_1+1}{\alpha+2} \right \},\\
R_2&= \left \{ x\in\Delta \colon x_2>\tfrac{(\alpha-1) x_3+1}{\alpha+2} \land x_1<\tfrac{(\alpha-1) x_2+1}{\alpha+2} \right \},\\
R_3&= \left \{ x\in\Delta \colon x_3>\tfrac{(\alpha-1) x_1+1}{\alpha+2} \land x_2<\tfrac{(\alpha-1) x_3+1}{\alpha+2} \right \},\\
\end{align*}
and the map $T$ restricted to $R_i$ has the expression
$$
T(x)=\lambda x + (1-\lambda)e_{i+1},\quad x\in R_i \,,
$$
where $i+1$ is taken module 3 and $\{e_i\}$ denotes the canonical basis of $\Rr^3$.
Let $S:\Delta\to\Delta$ be the map $(x_1,x_2,x_3)\mapsto(x_2,x_3,x_1)$. Clearly, $S$ leaves $\Delta\setminus\Gamma$ invariant. Indeed, $S(R_{i+1})=R_i$. Moreover,
\begin{lem}
$$S\circ T=T\circ S$$
\end{lem}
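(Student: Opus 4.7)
The plan is to exploit the fact that $S$ is a symmetry of both the partition $\{R_1,R_2,R_3\}$ and the labeling of the canonical basis. Since $T$ has the explicit piecewise linear form $T(x)=\lambda x+(1-\lambda)e_{i+1}$ on $R_i$, and $S$ is linear, the identity should reduce to bookkeeping on indices modulo $3$.

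First I would record two facts about $S$. The paper already states $S(R_{i+1})=R_i$, which I would rewrite as $S(R_i)=R_{i-1}$ (indices mod $3$); this tells us how orbits pass from one region to another under $S$. Next, from the definition $S(x_1,x_2,x_3)=(x_2,x_3,x_1)$ one checks directly that $S(e_1)=e_3$, $S(e_2)=e_1$, $S(e_3)=e_2$, i.e., $S(e_i)=e_{i-1}$ modulo $3$. In particular $S(e_{i+1})=e_i$.

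With these in hand, pick any $x\in\Delta\setminus\Gamma$ and let $i$ be the unique index with $x\in R_i$. Using linearity of $S$,
\[
S(T(x))=S\bigl(\lambda x+(1-\lambda)e_{i+1}\bigr)=\lambda S(x)+(1-\lambda)S(e_{i+1})=\lambda S(x)+(1-\lambda)e_i.
\]
On the other hand, since $S(x)\in S(R_i)=R_{i-1}$, the formula for $T$ on $R_{i-1}$ gives
\[
T(S(x))=\lambda S(x)+(1-\lambda)e_{(i-1)+1}=\lambda S(x)+(1-\lambda)e_i,
\]
and the two expressions agree. Since $x$ was arbitrary in the domain of $T$, this proves $S\circ T=T\circ S$.

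There is no real obstacle: once one observes that $S$ permutes both the regions and the vertices by the same cyclic shift (so the shifts cancel), the computation is one line per side. The only thing to be careful about is the direction of the cyclic shift and to verify the domain compatibility $S(\Delta\setminus\Gamma)=\Delta\setminus\Gamma$, which follows from $S(R_i)=R_{i-1}$.
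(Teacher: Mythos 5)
Your proof is correct and is essentially the paper's own argument: the authors likewise compute $S(T(x))=\lambda S(x)+(1-\lambda)e_i=T(S(x))$ for $x\in R_i$, implicitly using $S(e_{i+1})=e_i$ and $S(R_{i+1})=R_i$, which you simply make explicit. Your extra care with the direction of the cyclic shift and the invariance of $\Delta\setminus\Gamma$ is sound but adds nothing beyond the paper's one-line computation.
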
 
\begin{proof}
For any $x\in R_i$,
$$
S\left(T(x)\right)=S\left(\lambda x+(1-\lambda)e_{i+1}\right)=\lambda S(x)+(1-\lambda)e_i=T\left(S(x)\right)\,.
$$
\end{proof}
This means that $S$ is a symmetry for $T$. In the following section we will use this symmetry to reduce the study of the dynamics of $T$ to a single map with domain $R_1$.

Let 
\begin{equation}\label{ualpha}
u_\alpha:=(\alpha+2,1-\alpha,0).
\end{equation}
Then, we can express $R_1$ in a more compact way
$$
R_1=\{x\in\Delta\colon u_\alpha\cdot x >1\land S(u_\alpha)\cdot x <1\}
$$
where $\cdot$ denotes the standard inner-product in $\Rr^3$. Similar expressions hold for $R_2$ and $R_3$.

\section{Reduction by symmetry}\label{sec:ReducSym}

Using the symmetry $S$ we construct a skew-product map $F$ which has the same dynamics as $T$. We proceed as follows.

Let $ \pi \colon \bigcup_{i} R_{i} \to R_{1} $ defined by $ \pi(x) = S^{i-1}(x) $ when $ x \in R_{i} $ and define $$ \Gamma_1 = R_{1} \cap  T^{-1}(\Gamma). $$ 

Next, let $ f \colon R_{1} \setminus \Gamma_1 \to R_{1} $ be the map defined by $ f(x) = \pi \circ T(x) $ for $ x \in R_{1} \setminus \Gamma_1 $. Clearly, $ R_{1} \setminus \Gamma_1 $ is the union of two regions 

\begin{align*}
A &= \{x\in R_1\colon u_\alpha\cdot x >\alpha(\lambda^{-1}-1)+1\}, \\
B &= \{x\in R_1\colon u_\alpha\cdot x <\alpha(\lambda^{-1}-1)+1\},
\end{align*}
with common boundary equal to $ \Gamma_1 $ (see Figure~\ref{Region_R1_withAB}). Recall that $u_\alpha$ is defined in \eqref{ualpha}.


\begin{figure}[h]
\centering{\includegraphics[width=8cm]{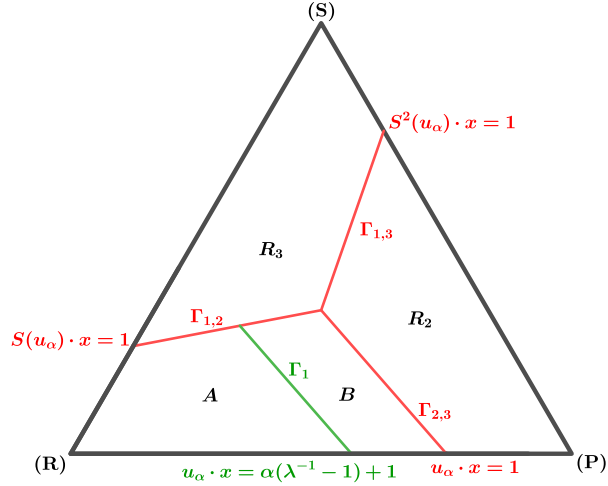}}
\caption{The simplex $\Delta$ with the regions $R_i$, where we can see the region $R_1$ divided into subregions $A$ and $B$.}
\label{Region_R1_withAB}
\end{figure}

Moreover, $ f $ is an affine transformation on both $A$ and $B$. Indeed,
$$
f|_A(x)=\lambda x + (1-\lambda) e_2  \quad \textrm{and} \quad f|_B(x)=\lambda S(x)+(1-\lambda) e_1 .
$$
Note that the sets $A$ and $B$ are characterized by the property $ T(A) \subset R_{1} $ and $T(B)\subset R_2$.
By construction,
$$ f \circ \pi = \pi \circ T. $$
Moreover, it is clear that $x$ is a regular point in $R_1$ if and only if $f^n(x)\notin \Gamma_1$ for every $n\geq0$. 

Let $\Zz_3$ denote the additive group of the integers module 3. Define $ \sigma \colon R_{1} \setminus \Gamma_1 \to \Zz_{3} $ by $ \sigma(x) = 0 $ whenever $ x \in A $ and $\sigma(x) = 1 $ otherwise.
Then, the skew-product $ F \colon (R_{1} \setminus \Gamma_1) \times \Zz_{3} \to R_{1} \times \Zz_{3} $ is given by
\[
F(x,j) = (f(x),\sigma(x) + j ) , \qquad (x,j) \in   (R_{1} \setminus \Gamma_1) \times \Zz_{3}.
\]

\begin{lem}\label{lem:conjugated}
The maps $ F $ and $ T $ are conjugated, i.e., there is a bijection $h:\Delta\setminus\Gamma\to R_1\times\Zz_3$ such that 
$$
F\circ h=h\circ T.
$$
\end{lem}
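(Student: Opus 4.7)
The natural candidate for $h$ is the map that records the projection to $R_1$ together with a label for the original region. Specifically, I would define
\[
h(x) = \bigl(\pi(x),\, i-1\bigr) \quad\text{whenever } x \in R_i,
\]
with the label interpreted modulo $3$. Bijectivity is then immediate: since $S$ cyclically permutes the regions with $S(R_{i+1}) = R_i$, each restriction $S^{i-1} \colon R_i \to R_1$ is a bijection, and the inverse of $h$ is given explicitly by $h^{-1}(y,j) = S^{-j}(y)$, which maps $R_1 \times \{j\}$ onto $R_{j+1}$.

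For the conjugacy identity $F \circ h = h \circ T$, the plan is to exploit the two identities already available: the symmetry $S \circ T = T \circ S$ from the previous lemma, and the factorization $f \circ \pi = \pi \circ T$ that holds by construction of $f$. Given $x \in R_i \setminus T^{-1}(\Gamma)$ with $T(x) \in R_k$, expanding both sides yields
\[
F(h(x)) = \bigl(\pi(T(x)),\, \sigma(\pi(x)) + (i-1)\bigr)
\quad\text{and}\quad
h(T(x)) = \bigl(\pi(T(x)),\, k-1\bigr).
\]
The base components thus agree automatically, and everything reduces to the cocycle identity $\sigma(\pi(x)) \equiv k - i \pmod{3}$ on the $\Zz_3$-fiber.

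This cocycle check is the one substantive step, and it is where the main obstacle lies, namely making the region-index bookkeeping line up. Applying the symmetry to $\pi(x) = S^{i-1}(x)$ gives $T(\pi(x)) = S^{i-1}(T(x)) \in R_{k-i+1}$, so $T(\pi(x))$ lands in $R_1$ or $R_2$ according as $k \equiv i$ or $k \equiv i+1 \pmod{3}$. By the definition of $\sigma$ (which is $0$ on $A$ and $1$ on $B$, corresponding respectively to $T(y) \in R_1$ and $T(y) \in R_2$ for $y \in R_1$), these two cases yield $\sigma(\pi(x)) = 0$ and $\sigma(\pi(x)) = 1$, matching the required value of $k-i \pmod{3}$. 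The third \emph{a priori} possibility $T(\pi(x)) \in R_3$ is ruled out by the already-established decomposition $R_1 \setminus \Gamma_1 = A \cup B$, so no further case needs treatment and the conjugacy is established.
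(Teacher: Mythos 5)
Your proposal is correct and coincides with the paper's proof: the paper defines exactly the same bijection $h(x)=(\pi(x),i-1)$ for $x\in R_i$ and then dismisses the verification as ``a simple computation''. Your write-up simply carries out that computation explicitly (the inverse $h^{-1}(y,j)=S^{-j}(y)$ and the fiber identity $\sigma(\pi(x))\equiv k-i \pmod 3$ via $S\circ T=T\circ S$ and the characterization $T(A)\subset R_1$, $T(B)\subset R_2$), all of which is accurate.
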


\begin{proof}
The bijection is  $h(x)=(\pi(x),i-1)$ whenever $x\in R_i$ with $i\in\{1,2,3\}$. It is now a simple computation to check that $F$ and $T$ are conjugated under $h$.
\end{proof}
By abuse of notation, we also use the symbol $ \pi $ for the projection from $ R_{1} \times \Zz_{3} \to R_1 $ given by $ \pi(x,i) = x $. 

Every periodic orbit of $ F $ (and so of $ T $) is mapped by $ \pi $ into a single periodic orbit of $ f $. The converse relation is less obvious, and is clarified in the next lemma.  For regular $x\in R_1$ let
$$ 
\sigma^{n}(x) = \sigma(f^{n-1}(x)) + \cdots + \sigma(x)\pmod{3},\quad n\geq1.
$$



\begin{lem}
\label{le:periodic}
Suppose that $ x \in R_{1} $ is a regular periodic point of $ f $ with period $p$ and that $\sigma^p(x)\neq0$. Then,  $ \pi^{-1}(O_{f}(x)) $ is a single periodic orbit of $ F $ of period $3p $.
\end{lem}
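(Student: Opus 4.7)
The plan is to exploit the semigroup-like recursion satisfied by the skew-product $F$ and reduce everything to a simple arithmetic statement modulo $3$. First I would establish by induction that
$$
F^n(x,j)=\left(f^n(x),\,\sigma^n(x)+j\pmod 3\right),\qquad n\ge 1,
$$
directly from the defining formula $F(x,j)=(f(x),\sigma(x)+j)$. This is the workhorse identity; everything else falls out of it.

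Next I would characterize when $(x,j)$ is $F$-periodic. The point $(x,j)$ returns to itself after $n$ iterations precisely when $f^n(x)=x$ \emph{and} $\sigma^n(x)\equiv 0\pmod 3$. Since $p$ is the minimal period of $x$ under $f$, the first condition forces $n$ to be a positive multiple of $p$, say $n=kp$. Using that the orbit $x,f(x),\ldots,f^{p-1}(x)$ is periodic of length $p$, so $\sigma(f^{p+i}(x))=\sigma(f^i(x))$, a straightforward telescoping gives
$$
\sigma^{kp}(x)=k\,\sigma^{p}(x)\pmod 3.
$$
By hypothesis $\sigma^p(x)\in\{1,2\}$, and since $3$ is prime, the smallest $k\ge 1$ for which $k\,\sigma^p(x)\equiv 0\pmod 3$ is $k=3$. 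Therefore the minimal $F$-period of $(x,0)$ is exactly $3p$.

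It remains to show that this single orbit of length $3p$ fills all of $\pi^{-1}(O_f(x))$. The fibre over $O_f(x)$ has cardinality $3p$ (three fibres of size one above each of the $p$ points of $O_f(x)$), so a counting argument suffices once I check that the orbit hits every fibre at every height. Applying the recursion at times $0,p,2p$ gives the three points
$$
(x,0),\quad (x,\sigma^p(x)),\quad (x,2\sigma^p(x)),
$$
whose second coordinates are distinct modulo $3$ (again because $\sigma^p(x)\not\equiv0$), hence equal to $\{0,1,2\}$. Translating these by $F^i$ for $0\le i<p$ exhibits all $3p$ elements of $\pi^{-1}(O_f(x))$ inside the orbit, completing the proof.

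The argument is essentially bookkeeping; the only mildly delicate step is the telescoping identity $\sigma^{kp}(x)=k\sigma^p(x)\pmod 3$, which requires one to notice that $f$-periodicity of $x$ makes the partial sums $\sigma^n$ behave additively on blocks of length $p$. The hypothesis $\sigma^p(x)\ne 0$ and the primality of $3$ are the only ingredients that pin the tripling of the period; without either of them one cannot conclude period $3p$.
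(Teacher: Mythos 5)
Your proposal is correct and follows essentially the same route as the paper: both rest on the identity $F^n(x,j)=(f^n(x),\sigma^n(x)+j)$, the observation that $\sigma^{kp}(x)=k\,\sigma^p(x)$ in $\Zz_3$ so that $\sigma^p(x)\neq0$ forces minimal return at $k=3$, and a counting argument identifying the $3p$-point orbit with the $3p$-element set $\pi^{-1}(O_f(x))$. Your version is marginally more explicit than the paper's (you spell out why any $F$-period must be a multiple of $p$ and exhibit the orbit covering each fibre at times $kp+i$, where the paper instead notes every fibre point is $3p$-periodic and counts), but these are presentational differences, not a different argument.
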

\begin{proof}
Note that, $ F^{n}(x,j)=(f^{n}(x),\sigma^{n}(x) + j ) $. Let $ y \in \pi^{-1}(O_{f}(x)) $. Then $ y = (x',i) $ for some $ x' \in O_{f}(x) $ and some $ i\in\Zz_3 $. For every integer $ j>0 $, we have 
\[ 
F^{jp}(y) = (f^{jp}(x'),j \sigma^{p}(x')+i) = (x',j \sigma^{p}(x) + i).
\]
It follows that $ y $ is a periodic point of $ F $ of period equal to 
$$
\min \{j\geq1 \colon j \sigma^{p}(x) = 0 \}\cdot p = 3p,
$$
because $\sigma^{p}(x) \in\Zz_3\setminus\{0\}$ by hypothesis.
This shows that every element of $ \pi^{-1}(O_{f}(x)) $ is a periodic point of $ F $ of period $ 3p $. But $ \pi^{-1}(O_{f}(x)) $ consists of $ 3p $ elements, so we conclude that $ \pi^{-1}(O_{f}(x)) $ is a single periodic orbit of $ F $ of period $3p$.

\end{proof}

\begin{lem}\label{lem:sufconditionAP}
If $f$ is asymptotically periodic, then $T$ is asymptotically periodic.
\end{lem}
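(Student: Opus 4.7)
The plan is to transfer the asymptotic periodicity of $f$ to $T$ via Lemma~\ref{lem:conjugated}, which reduces the claim to showing that the skew product $F$ is asymptotically periodic whenever $f$ is.

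First, I would handle finiteness of periodic regular points of $F$. Since $\pi\circ F=f\circ\pi$, every $F$-periodic orbit projects to an $f$-periodic orbit, and conversely the preimage $\pi^{-1}(O_f(y))$ of an $f$-periodic orbit of period $p$ consists of exactly $3p$ points. By Lemma~\ref{le:periodic} this preimage is a single $F$-periodic orbit of period $3p$ when $\sigma^p(y)\neq 0$, and the same cocycle computation shows it splits into three disjoint $F$-periodic orbits of period $p$ when $\sigma^p(y)=0$. So each $f$-periodic orbit lifts to at most three $F$-periodic orbits, and finiteness is preserved.

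Next, I would establish convergence of $\omega$-limits. Fix a regular point $(x,j)\in R_1\times\Zz_3$. Then $x$ is $f$-regular and by hypothesis $\omega_f(x)=O_f(y)=\{y_0,\ldots,y_{p-1}\}$ with $y_k=f^k(y)$. Each $y_k$ lies in the open set $A$ or $B$, and both restrictions $f|_A$ and $f|_B$ are affine contractions of ratio $\lambda<1$, using that $S$ is an isometry. Choose $\delta>0$ so that the $\delta$-neighbourhoods $U_0,\ldots,U_{p-1}$ of the $y_k$ in $R_1$ are pairwise disjoint and each is contained in $A$ or $B$. Since $\omega_f(x)=O_f(y)$ there is $N_0$ such that $f^n(x)\in\bigcup_kU_k$ for all $n\geq N_0$; write $f^n(x)\in U_{k(n)}$. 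Then
$$
|f^{n+1}(x)-y_{k(n)+1\bmod p}|\leq\lambda\,|f^n(x)-y_{k(n)}|<\delta,
$$
so disjointness of the $U_k$ forces $k(n+1)\equiv k(n)+1\pmod p$. Hence $\sigma(f^n(x))=\sigma(y_{k(n)})$ is eventually periodic in $n$ with period $p$, which makes $\sigma^n(x)\pmod 3$ eventually periodic with period $p$ if $\sigma^p(y)=0$ and with period $3p$ otherwise. Combined with the component-wise convergence $f^n(x)\to O_f(y)$, this shows $\omega_F(x,j)$ is a single $F$-periodic orbit contained in $\pi^{-1}(O_f(y))$.

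The only nontrivial step is the phase-locking argument: one must upgrade the purely topological statement $\omega_f(x)=O_f(y)$ to the stronger fact that, for large $n$, $f^n(x)$ visits the points $y_0,y_1,\ldots,y_{p-1}$ in their correct cyclic order, so that the symbol sequence $\sigma\circ f^n$ inherits the periodicity of $\sigma$ along $O_f(y)$. This is exactly what the piecewise contractive structure of $f$ delivers, and it is the only place where the specific affine form of $f$ (beyond $f$ being asymptotically periodic) enters the proof.
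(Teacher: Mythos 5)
Your proof is correct and follows essentially the same route as the paper: finiteness of the attractor of $F$ via the three-point fibers of $\pi$, convergence of every regular $F$-orbit lifted from the base map $f$, and transfer to $T$ via the conjugacy of Lemma~\ref{lem:conjugated}. The only difference is one of detail: where the paper simply asserts ``thus $x$ converges to a periodic regular point in $\pi^{-1}(z)$'', you correctly supply the missing phase-locking argument (disjoint $\delta$-neighbourhoods, the contraction ratio $\lambda$ with $S$ an isometry, forcing $k(n+1)\equiv k(n)+1 \pmod p$ so that the symbol sequence $\sigma\circ f^n$ synchronizes with that of the limit orbit and the $\Zz_3$-coordinate stabilizes), and you also cover the degenerate case $\sigma^p(y)=0$, which Lemma~\ref{le:periodic} excludes by hypothesis.
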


\begin{proof}
Since $f$ is asymptotically periodic, its attractor $\Lambda_f$ consists of a finite number of periodic regular points.
As seen in the proof of Lemma~\ref{le:periodic}, every periodic regular point in $\Lambda_f$ gives at most 3 periodic regular points of $F$. Now suppose that $x$ is a regular point of $F$. Then $\pi(x)$ is a regular point of $f$. Because $f$ is asymptotically periodic, the orbit of $\pi(x)$ converges to a  periodic regular point $z\in\Lambda_f$ of $f$. 
Thus, $x$ converges to a periodic regular point in $\pi^{-1}(z)$. This shows that $F$ is asymptotically periodic. Since $F$ and $T$ are conjugated (see Lemma~\ref{lem:conjugated}), we conclude that $T$ is also asymptotically periodic.
\end{proof}

\section{Poincar\'e map}\label{sec:PoincMap}

In this section we induce the dynamics of $f$ on the region $B$. 
Define the first return time $n\colon  R_1\setminus \Gamma_1 \to \Nn$ to the closure of the set $B$ by 
$$
n(x)=\min\{k\in\Nn\colon f^k(x)\in \overline{B}\}.
$$

\begin{lem}\label{lem:return time}
The first return time is bounded, i.e., there is $C=C(\alpha,\lambda)>0$ such that $
n(x)\leq C$ for every $x\in R_1\setminus \Gamma_1 $.
\end{lem}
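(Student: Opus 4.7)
The plan is to exploit the very explicit form of $f|_A$: on $A$, $f$ is the affine contraction $x\mapsto \lambda x+(1-\lambda)e_2$ toward the vertex $e_2$, and $e_2$ lies strictly outside $\overline{A}$. If $x\in\overline{B}$ then the bound is trivial, so the real content is to show that an orbit starting in $A$ must enter $\overline{B}$ in a number of steps depending only on $\alpha$ and $\lambda$. The natural tool is the linear functional $u_\alpha$ which defines $A=\{u_\alpha\cdot x>c\}$ with $c:=\alpha(\lambda^{-1}-1)+1$; this will play the role of a Lyapunov function for the iteration $f|_A$.

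The key computation is as follows. Assume inductively that $f^0(x),\ldots,f^{k-1}(x)$ all lie in $A$. Then
$$
f^k(x)=\lambda^k x+(1-\lambda^k)e_2,
$$
so, pairing with $u_\alpha$ and using $u_\alpha\cdot e_2=1-\alpha$,
$$
u_\alpha\cdot f^k(x)=\lambda^k(u_\alpha\cdot x)+(1-\lambda^k)(1-\alpha).
$$
Since $\alpha>0$ and $0<\lambda<1$ we have $1-\alpha<c$; as $k$ grows, $u_\alpha\cdot f^k(x)$ is a convex combination of $u_\alpha\cdot x$ and $1-\alpha$ that tends monotonically to $1-\alpha<c$, so it must drop below $c$ in finitely many steps. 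The inequality $u_\alpha\cdot f^k(x)<c$ is equivalent to
$$
\lambda^k\bigl(u_\alpha\cdot x-(1-\alpha)\bigr)<c-(1-\alpha)=\tfrac{\alpha}{\lambda},
$$
and because $u_\alpha\cdot x\leq \alpha+2$ on $\Delta$ (the coordinates of $u_\alpha$ are at most $\alpha+2$ and $x\in\Delta$), the resulting bound on $k$ depends only on $\alpha$ and $\lambda$. This gives an explicit constant $C=C(\alpha,\lambda)$ with $k\leq C$.

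The one subtlety is verifying that the iterates stay in the domain of $f|_A$ until they exit $A$, i.e.\ that the orbit cannot leak into $R_2$ or $R_3$ before reaching $\overline{B}$. This is handled by the characterization $T(A)\subset R_1$ recorded just before the lemma: whenever $f^j(x)\in A$ we have $f^{j+1}(x)\in R_1$, so by induction the orbit stays in $R_1$ for as long as it stays in $A$. Consequently, at the first time $k\leq C$ when $f^k(x)\notin A$, we have $f^k(x)\in R_1\setminus A= B\cup\Gamma_1\subset\overline{B}$, so $n(x)\leq C$. The main point, rather than an obstacle, is the choice of $u_\alpha$ as a monotone quantity under $f|_A$; everything else is a direct estimate on the geometric sequence $\lambda^k$.
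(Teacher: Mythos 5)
Your proof is correct and takes essentially the same route as the paper's: iterating the linear functional $u_\alpha$ under $f|_A$ and bounding $u_\alpha\cdot x\leq\alpha+2$ on $\Delta$ is exactly the paper's reduction to the worst case $x=e_1$ (where $u_\alpha\cdot e_1=\alpha+2$), and it yields the same threshold $\lambda^{k+1}<\frac{\alpha}{2\alpha+1}$ and hence the same constant $C=\left\lceil -1+\log_{\lambda}\left(\frac{\alpha}{2\alpha+1}\right)\right\rceil$. Your extra verification that the orbit remains in $R_1$ while in $A$ (via $T(A)\subset R_1$) is a point the paper leaves implicit, but it is added rigor rather than a different argument.
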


\begin{proof}
Let $x\in R_1\setminus \Gamma_1$. Since $f|_A(x)$ is a convex combination of $x$ and $e_2$, to determine the upper bound  $C(\alpha,\lambda)$
we have to find the first $n\in\Nn$ such that $\left(f|_A\right)^n(e_1)\in \overline{B}$, that is the first $n\in\Nn$ such that
$$
u_\alpha\cdot\left(f|_A\right)^n(e_1)\leq\alpha(\lambda^{-1}-1)+1\,,
$$
which is equivalent to $\lambda^{n+1}\leq\frac{\alpha}{2\alpha+1}.$
Hence $C:=\lceil -1+\log_{\lambda}\left(\frac{\alpha}{2\alpha+1}\right)\rceil$.
\end{proof}

Since the first return time is bounded, we can define the Poincar\'e map $P: B\to B$ by $P(x)=f^{n(x)}(x)$. Let
$$
B_k=\mathrm{int}(n^{-1}({k})\cap B).
$$
Clearly, $P|_{B_k}=(f|_A)^{k-1}\circ (f|_{B_k})$. By Lemma~\ref{lem:return time}, there are only finitely many non-empty sets $B_k$. This means that $P$ is piecewise affine with a finite number of branches. The following lemma gives an analytic expression for these sets $B_k$
(see Figure~\ref{Region_B_withBks}) and corresponding restrictions of the Poincar\'e map.
Let
$$
b_k:= \lambda^{-k-1}\alpha - \lambda^{-1}(2\alpha +1) + \alpha + 2.
$$


\begin{figure}[h]
\centering{\includegraphics[width=9cm]{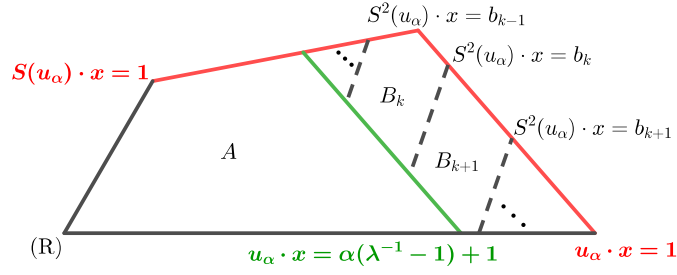}}
\caption{The region $R_1$, where $B$ is divided into subregions $B_k$ of $B$ delimited by $S^2(u_\alpha)\cdot x=b_{k-1}$ and
$S^2(u_\alpha)\cdot x=b_{k}$.}
\label{Region_B_withBks}
\end{figure}

\begin{lem}\label{lem:Bk and P}
For any $k\in\Nn$, 
\begin{align*}
B_k&=\{x\in B\colon b_{k-1}<S^2(u_\alpha)\cdot x < b_k\},\\
P|_{B_k}(x)&=\lambda^k S(x)+\lambda^{k-1}(1-\lambda)e_1+(1-\lambda^{k-1})e_2\,.
\end{align*}
\end{lem}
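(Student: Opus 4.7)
The plan has two independent parts: deriving the closed-form expression for $P|_{B_k}$, and identifying $B_k$ as a strip defined by $b_{k-1}<S^2(u_\alpha)\cdot x<b_k$.

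For the first part, I would start from the definition $B_k=\mathrm{int}(n^{-1}(k)\cap B)$, which means that for $x\in B_k$ one has $f(x)=f|_B(x)$, followed by $k-1$ iterations of $f|_A$ before returning to $\overline{B}$. Hence $P|_{B_k}(x)=(f|_A)^{k-1}(f|_B(x))$. Since $f|_A$ is an affine contraction towards $e_2$, a routine induction (or a geometric-series computation) gives
$$(f|_A)^{m}(y)=\lambda^{m}y+(1-\lambda^{m})e_2,\quad m\geq 0.$$
Substituting $y=f|_B(x)=\lambda S(x)+(1-\lambda)e_1$ with $m=k-1$ yields exactly $\lambda^{k}S(x)+\lambda^{k-1}(1-\lambda)e_1+(1-\lambda^{k-1})e_2$, which is the claimed formula.

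For the second part, I would encode the event ``$(f|_A)^j(f|_B(x))\in A$'' via the defining inequality $u_\alpha\cdot y>\alpha(\lambda^{-1}-1)+1$ of $A$, and the complementary event via the reverse (non-strict) inequality for $\overline{B}$. Using the identity $u_\alpha\cdot S(x)=S^{2}(u_\alpha)\cdot x$ (which holds because $S$ is an orthogonal permutation of order $3$, so $S^{\top}=S^{-1}=S^{2}$), one computes
$$u_\alpha\cdot(f|_A)^{j}(f|_B(x))=\lambda^{j+1}\,S^{2}(u_\alpha)\cdot x+\lambda^{j}(1-\lambda)(\alpha+2)+(1-\lambda^{j})(1-\alpha).$$
Isolating $S^{2}(u_\alpha)\cdot x$ in the boundary equation $u_\alpha\cdot(f|_A)^j(f|_B(x))=\alpha(\lambda^{-1}-1)+1$ and simplifying (distributing and grouping the $\lambda^{j}$ terms) produces the threshold $b_{j+1}=\lambda^{-j-2}\alpha-\lambda^{-1}(2\alpha+1)+\alpha+2$. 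Thus the $j$-th intermediate iterate lies in $A$ iff $S^{2}(u_\alpha)\cdot x>b_{j+1}$ and returns into $\overline{B}$ iff $S^{2}(u_\alpha)\cdot x\leq b_{j+1}$.

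To conclude, I would note that $k\mapsto b_k$ is strictly increasing because $\lambda\in(0,1)$ forces $\lambda^{-k-1}$ to grow with $k$. Consequently, among the conditions $S^{2}(u_\alpha)\cdot x>b_{1},\ldots,b_{k-1}$ corresponding to $j=0,\ldots,k-2$, only the sharpest one $S^{2}(u_\alpha)\cdot x>b_{k-1}$ is binding; together with the return condition $S^{2}(u_\alpha)\cdot x<b_k$ at step $j=k-1$ (taking interiors to pass to strict inequalities), this yields $B_k=\{x\in B:b_{k-1}<S^{2}(u_\alpha)\cdot x<b_k\}$. The only place where care is needed is the algebraic reduction from the raw inequality $u_\alpha\cdot(f|_A)^{j}(f|_B(x))\lessgtr\alpha(\lambda^{-1}-1)+1$ to the compact form $b_{j+1}$; this is purely computational and poses no real obstacle, as the constant terms telescope cleanly once $(1-\lambda^{j})(1-\alpha)$ is split between the $\lambda$-independent part and the $\lambda^{j}$ part.
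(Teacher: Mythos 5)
Your proposal is correct, and its first half (the closed form for $P|_{B_k}$ via $(f|_A)^m(y)=\lambda^m y+(1-\lambda^m)e_2$ applied to $y=f|_B(x)$) coincides with the paper's computation. Where you genuinely diverge is in identifying the strip: the paper never examines the intermediate iterates. It imposes only the final membership $P(x)\in B$, i.e.\ $1<u_\alpha\cdot P(x)<\alpha(\lambda^{-1}-1)+1$, and \emph{both} bounds $b_{k-1}<S^2(u_\alpha)\cdot x<b_k$ drop out of that single two-sided inequality after dividing by $\lambda^k$; note that the lower condition $u_\alpha\cdot P(x)>1$ is equivalent to your itinerary condition $f^{k-1}(x)\in A$, because the line $u_\alpha\cdot y=\alpha(\lambda^{-1}-1)+1$ is exactly the $f|_A$-preimage of the line $u_\alpha\cdot y=1$. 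You instead translate every condition $f^{j+1}(x)\in A$ into $S^2(u_\alpha)\cdot x>b_{j+1}$ and discard the non-binding ones using the monotonicity of $k\mapsto b_k$. This costs more bookkeeping but buys something the paper leaves implicit: your step-by-step characterization is an ``if and only if'' at every stage (using that $f$ maps into $R_1$, so within $R_1$ the complement of the $A$-condition is the $\overline{B}$-condition), hence you obtain the set equality by double inclusion, whereas the paper's proof as written only derives the necessity $B_k\subset\{x\in B\colon b_{k-1}<S^2(u_\alpha)\cdot x<b_k\}$ and tacitly relies on the fact that both families partition $B$ to upgrade this to equality. One small edge case you should patch: for $k=1$ there are no intermediate conditions, so your argument yields only the upper bound $S^2(u_\alpha)\cdot x<b_1$, and the stated lower bound $b_0$ must be checked to hold automatically on $B$. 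It does, in one line: since $u_\alpha+S(u_\alpha)+S^2(u_\alpha)=(3,3,3)$, the inequalities $S(u_\alpha)\cdot x<1$ and $u_\alpha\cdot x<\alpha(\lambda^{-1}-1)+1$ give $S^2(u_\alpha)\cdot x>1+\alpha-\alpha\lambda^{-1}=b_0+\lambda^{-1}(1-\lambda)>b_0$; with this observation added, your argument is complete.
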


\begin{proof}
Let $k\in\Nn$. By the definition of $B_k$ we have that
$$
B_k=\{x\in B\colon f^k(x)\in B \land f^j(x)\in A, \forall j<k \}.
$$
Hence, for $x\in B_k$,
\begin{align*}
P(x)&=(f|_A)^{k-1}\circ (f|_B)(x)\\
		&=(f|_A)^{k-1}\left( \lambda S(x)+(1-\lambda) e_1 \right)\\
		&=\lambda^{k-1}\left( \lambda S(x)+(1-\lambda) e_1 \right) + (1-\lambda^{k-1})e_2\\
		&=\lambda^k S(x)+\lambda^{k-1}(1-\lambda)e_1+(1-\lambda^{k-1})e_2\,.
\end{align*}
Since $P(x)\in B$,
$$
u_\alpha\cdot P(x) \in\left(1,\alpha(\lambda^{-1}-1)+1\right),
$$
but
\begin{align*}
u_\alpha\cdot P(x)&= u_\alpha\cdot\left( \lambda^k S(x)+\lambda^{k-1}(1-\lambda)e_1+(1-\lambda^{k-1})e_2 \right)\\
&= \lambda^k S^2(u_\alpha)\cdot x+\lambda^{k-1}(1-\lambda)(\alpha+2)-(1-\lambda^{k-1})(\alpha-1)\\
&= \lambda^k S^2(u_\alpha)\cdot x+\lambda^{k-1}(\alpha-1+(1-\lambda)(\alpha+2))-\alpha+1,
\end{align*}
then
$$
\lambda^{-k}\alpha< S^2(u_\alpha)\cdot x +C_{\alpha,\lambda}< \lambda^{-k-1}\alpha,
$$
with $C_{\alpha,\lambda}=\lambda^{-1}(\alpha-1+(1-\lambda)(\alpha+2))$, and
$b_k=\lambda^{-k-1}\alpha-C_{\alpha,\lambda}$.
\end{proof}


Next, we will describe the monotonic dynamics of the Poincar\'e map $P$. Define
$$
m=m_\alpha(\lambda):=\min\{k\in\Nn\colon \lambda^k<\alpha\}.
$$
Notice that $m=1$ whenever $\alpha\geq1$. Clearly,
$$
m=1+\max\left\{0,\left\lfloor \frac{\log{\alpha}}{\log\lambda}\right\rfloor\right\}.
$$

Denote by $\hat{B}$ the set of regular points in $B$, i.e., $\hat{B}=B\cap \hat{\Delta}$.  
Given any $x\in \hat{B}$ we define its itinerary $\boldsymbol{i}(x)=(i_0,i_1,i_2,\ldots)$ where $i_k=n(P^k(x))$ for every $k\geq0$.
We describe the monotonicity of the Poincar\'e map $P$ in the following proposition.

\begin{prop}\label{prop:monotone}
Let $x\in \hat{B}$ with itinerary $\boldsymbol{i}(x)=(i_0,i_1,i_2,\ldots)$. For every $k\geq0$ the following holds:
\begin{enumerate}
\item If $i_k\geq m$, then $i_{k+j}\geq m$ for every $j\geq 0$.
\item If $i_{k}\geq m$ and $i_{k+1}\leq i_k$, then $i_{k+2}\leq i_{k+1}$.
\item  If $i_{k+1}< m$ and $i_{k+1}\geq i_k$, then $i_{k+2}\geq i_{k+1}$.
\end{enumerate}
\end{prop}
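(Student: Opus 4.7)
The strategy is to reduce everything to explicit inequalities on the scalar coordinate $\varphi(x):=S^2(u_\alpha)\cdot x$, which by Lemma~\ref{lem:Bk and P} determines membership in $B_k$ via $b_{k-1}<\varphi(x)<b_k$. Writing $\psi(x):=S(u_\alpha)\cdot x$ and using $S(u_\alpha)\cdot S(x)=u_\alpha\cdot x$, one obtains from Lemma~\ref{lem:Bk and P} the key formula, valid whenever $x\in B_k$,
\[
\varphi(P(x))=(\alpha+2)(1-\lambda^{k-1})+\lambda^k\,\psi(x).
\]
Since on $\Delta$ we have $\mu(x)+\varphi(x)+\psi(x)=3$ with $\mu:=u_\alpha\cdot x$, and since $x\in B$ forces $1<\mu(x)<\alpha(\lambda^{-1}-1)+1$, this expresses $\varphi(P(x))$ in terms of the pair $(\mu(x),\varphi(x))$ and the integer $k=n(x)$.

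\textbf{Step 1 (Claim (1), propagation of $i_k\geq m$).} The condition $i_k\geq m$ translates to $\varphi(P^k(x))>b_{m-1}=\lambda^{-m}\alpha+c$, where $c:=\alpha+2-\lambda^{-1}(2\alpha+1)$. I would plug this lower bound and the $B$-range of $\psi$ into the key formula and show algebraically that $\varphi(P^{k+1}(x))>b_{m-1}$, using crucially that $\lambda^m<\alpha$. By induction this yields $i_{k+j}\geq m$ for every $j\geq0$.

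\textbf{Step 2 (Claims (2) and (3), monotonicity).} Here one must compare three consecutive iterates. I would set $x_j:=P^{k+j}(x)$ and write the key formula twice to express $\varphi(x_2)$ in terms of $\varphi(x_0)$, $\psi(x_0)$ and $\psi(x_1)$, and then use $\psi(x_1)$ in terms of $(\mu(x_0),\varphi(x_0))$ via the companion identity
\[
\psi(P(x))=\lambda^{i_0}\mu(x)+\lambda^{i_0-1}(1-\lambda)(1-\alpha),
\]
derived analogously. The hypotheses $i_{k+1}\leq i_k$ (resp.~$i_{k+1}\geq i_k$) translate to inequalities $\varphi(x_1)\leq b_{i_k}$ (resp.~$\varphi(x_1)\geq b_{i_k-1}$) and similarly for $x_2$. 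The goal is then to show that the inequality for $x_2$ follows from the one for $x_1$ together with the assumption on the regime ($i_k\geq m$ or $i_{k+1}<m$). The role of $m$ is exactly to split the sign of $\lambda^{i_\cdot}-\alpha$, which controls the direction in which the inequality propagates under the affine map.

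\textbf{Main obstacle.} The dynamics on $B$ is genuinely two-dimensional because $\varphi(P(x))$ depends on both $\mu(x)$ and $\varphi(x)$ (through $\psi$), so there is no straightforward 1D monotone map on the symbolic sequence. The hard part is therefore to find the right combination of the two defining inequalities of $B_k$ and $B$ that propagates under $P$ and produces the clean monotonicity of the integer sequence $(i_k)$. I expect this is where the threshold $m$ (i.e.\ the sign of $\lambda^k-\alpha$) plays the decisive role, effectively controlling whether the affine map expands or contracts the relevant combination of $\varphi$ and $\mu$. Once the correct one-sided estimates are identified, the three claims should follow by parallel algebraic manipulations, which is presumably the content of the appendix lemmas.
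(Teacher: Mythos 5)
Your set-up coincides with the paper's: Proposition~\ref{prop:monotone} is proved there (Appendix~\ref{sec:MonotLems}) by tracking exactly your scalar $\varphi(x)=S^2(u_\alpha)\cdot x$ through the affine branches, via the two identities you state, $\varphi(P(x))=\lambda^{k}\psi(x)+(1-\lambda^{k-1})(2+\alpha)$ and $\psi(P(x))=\lambda^{k}\mu(x)+\lambda^{k-1}(1-\lambda)(1-\alpha)$ for $x\in B_k$, which combine into $\varphi(P^2(x))=\lambda^{j+k}\mu(x)-\lambda^{j+k-1}(1-\lambda)(\alpha-1)+(1-\lambda^{j-1})(2+\alpha)$ --- precisely the displays in Lemmas~\ref{lem:mon1}, \ref{lem:mon2} and \ref{lem:mon3}. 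The problem is that your write-up defers exactly the content of those lemmas: the decisive one-sided estimates are asserted (``I expect'', ``should follow'', ``presumably'') rather than carried out, and since the paper's proof of the proposition is literally ``immediate'' from those lemmas, what is missing is the whole proof. Concretely, for items (2) and (3) one bounds $\mu(x)$ above by $\alpha(\lambda^{-1}-1)+1$ (resp.\ below by $1$), uses the ordering hypothesis as an \emph{integer exponent comparison} ($\lambda^{j+k}\le\lambda^{2j}$ when $k\ge j$, resp.\ $\lambda^{2j+k}\ge\lambda^{3j}$ when $k\le j$) --- note this, not your reformulation ``$\varphi(x_1)\le b_{i_k}$'', is how $i_{k+1}\le i_k$ actually enters --- and closes with the factorization $\lambda^{j+1}\bigl(Q_{j,k,\alpha}(\lambda)-b_j\bigr)\le(\lambda^j-1)^2(\lambda^j-\alpha)$, whose sign is governed by $\lambda^j\lessgtr\alpha$, i.e.\ by $j\ge m$ versus $j<m$; this confirms your guess about the role of $m$, but the inequality itself is the theorem. (Also, item (2) assumes $i_k\ge m$ while the sign dichotomy must be applied at $j=i_{k+1}$, so item (1) is needed first to upgrade this to $i_{k+1}\ge m$.)

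More seriously, Step 1 as sketched would not close. In the key formula the coefficient $\lambda^{i_k}$ of $\psi$ is positive, so a lower bound on $\varphi(P(y))$ needs a \emph{lower} bound on $\psi(y)=3-\mu(y)-\varphi(y)$; your hypothesis $\varphi(y)>b_{m-1}$ yields an \emph{upper} bound on $\psi(y)$ (the wrong direction), and a $k$-independent ``$B$-range'' lower bound on $\psi$ is quantitatively insufficient: e.g.\ for $\alpha=0.9$, $\lambda=0.95$ (so $m=3$) one has $\inf_B\psi\approx0.032$, and with $k=m$ this gives only $\varphi(P(y))>2.9(1-\lambda^{2})+\lambda^{3}\cdot 0.032\approx 0.31$, far below $b_{m-1}\approx 1.00$. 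The paper instead uses the $B_k$-\emph{localized} upper bound $\varphi(y)<b_k$ together with $\mu(y)<\alpha(\lambda^{-1}-1)+1$; the mechanism is the exact cancellation $\lambda^{k}b_k=\lambda^{-1}\alpha-\lambda^{k}C_{\alpha,\lambda}$ of the exponentially growing threshold against the contraction factor, which produces the $k$-monotone bound $\varphi(P(y))>2+\alpha-\lambda^{k-1}-\alpha\lambda^{-1}$ and then beats $b_{m-1}$ precisely because $\lambda^{m-1}\ge\alpha>\lambda^{m}$. So your ``main obstacle'' diagnosis --- that one must find the right combination of the defining inequalities of $B$ and $B_k$ --- is accurate, but the combination you selected for item (1) is not the right one, and for all three items the algebra that constitutes the proof remains to be done.
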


\begin{proof}
Immediate from Lemmas~\ref{lem:mon1}, \ref{lem:mon2} and \ref{lem:mon3} stated and proved in Appendix~\ref{sec:MonotLems}. 
\end{proof}

\section{Proof of Theorem~\ref{th:main}}\label{sec:MainResProof}
In this section we show that $f$ is asymptotically periodic. This implies that $T$ is asymptotically periodic by Lemma~\ref{lem:sufconditionAP}, thus proving the main Theorem \ref{th:main}. 

Let $x\in \hat{R}_1:=R_1\cap\hat{\Delta}$ be a regular point. By Lemma~\ref{lem:return time}, there is $n_0\geq0$ such that $y:=f^{n_0}(x)\in \hat{B}$. Let $\boldsymbol{i}(y)=(i_0,i_1,i_2,\ldots)$ be the itinerary of $y$ under the Poincar\'e map $P$. Notice, by Lemma~\ref{lem:return time}, that $i_k\in\{1,\ldots, C\}$ for every $k\geq0$ where $C=C(\alpha,\lambda)$ is the constant in that lemma. We will show that the itinerary of $y$ \textit{stabilizes}, i.e., there is $q\geq0$ such that $i_{k+q}=i_q$ for every $k\geq0$. We have two cases:


\begin{enumerate}
\item First suppose that $i_k<m$ for every $k\geq0$. Then either the itinerary is non-increasing, which implies that the it stabilizes or else there is $p\geq0$ such that $i_{p+1}\geq i_p$. In the former case, by item (3) of Proposition~\ref{prop:monotone}, we conclude that the itinerary is eventually non-decreasing, thus also stabilizes.
\item Now suppose that there is some $q\geq0$ such that $i_q\geq m$. Then, by item (1) of Proposition~\ref{prop:monotone}, we have that $i_{q+k}\geq m$ for every $k\geq 0$. So, arguing as in the first case, either the itinerary is non-decreasing, or else, by item (2) of Proposition~\ref{prop:monotone}, the itinerary is eventually non-increasing. In either way we conclude that the itinerary stabilizes.
\end{enumerate}
Because the itinerary of $y$ stabilizes, it means that after some iterate the orbit of $y$ belongs to $B_p$ for some $p\geq0$, i.e., there is $n_1>0$ such that $P^{n}(y)\in B_p$ for every $n\geq n_1$. Since $P$ restricted to $B_p$ is an affine contraction (see Lemma~\ref{lem:Bk and P}), we deduce that the orbit of $y$ under the map $P$ converges to the unique fixed point of $P|_{B_p}$, which we denote by $w_p$. Notice that $w_p$ belongs to the closure of $B_p$. We claim that $w_p\in B_p$. Indeed, if that was not the case, i.e., $w_p\in\partial B_p$, then as the orbit of $y$ accumulates at $w_p$ and the map $P|_{B_p}$ rotates points by an angle $2\pi/3$ about $w_p$, the orbit of $y$ would have infinitely many points outside $B_p$, which contradicts the fact that the orbit of $y$ stabilizes. Going back to the map $f$, the fixed point $w_p$ corresponds to a periodic orbit $\gamma_p:=\{w_p,f(w_p),\ldots, f^{p-1}(w_p)\}$ of $f$ having period $p$ and the orbit of $x$ converges to $\gamma_p$. This shows that every regular point $x\in \hat{R}_1$ converges under the map $f$ to a periodic orbit. Since any periodic point of $f$ corresponds to a unique fixed point of $P$, we have shown that the attractor of $f$ consists of a finite number of periodic orbits, which are fixed points of the Poncar\'e map $P$. Thus, $f$ is asymptotically periodic.

\section{Bifurcation of periodic orbits}\label{sec:PerOrbBif}

In this section we study the bifurcations of the attractor of $T$. 
Given $k\in\Nn$, let $P_k$ be the extension of $P|_{B_k}$ (see Lemma~\ref{lem:Bk and P}) to an affine contraction on $\Rr^3$ and denote by $w_k$ its unique fixed point.
\begin{lem}\label{lem: w_k}
$$
w_k=\left(\frac{\lambda^{k-1}(1-\lambda^k)}{1-\lambda^{3k}},\frac{1-\lambda^{k-1}+\lambda^{3k-1}-\lambda^{3k}}{1-\lambda^{3k}},\frac{\lambda^{2k-1}(1-\lambda^k)}{1-\lambda^{3k}}\right).
$$
\end{lem}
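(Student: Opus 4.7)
The plan is straightforward: write out the fixed-point equation $P_k(w_k)=w_k$ componentwise and solve the resulting $3\times 3$ linear system by exploiting its cyclic structure.

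Let $w_k = (u,v,w)$, so that $S(w_k) = (v,w,u)$. Using the formula for $P|_{B_k}$ from Lemma~\ref{lem:Bk and P}, the equation $w_k = \lambda^k S(w_k) + \lambda^{k-1}(1-\lambda)e_1 + (1-\lambda^{k-1})e_2$ splits into
\begin{align*}
u &= \lambda^k v + \lambda^{k-1}(1-\lambda), \\
v &= \lambda^k w + (1-\lambda^{k-1}), \\
w &= \lambda^k u.
\end{align*}

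The approach is to substitute the third equation into the second, obtaining $v = \lambda^{2k} u + (1-\lambda^{k-1})$, and then substitute that into the first. This yields
$$
u = \lambda^{3k} u + \lambda^k(1-\lambda^{k-1}) + \lambda^{k-1}(1-\lambda),
$$
and after simplifying the right-hand side the cross terms $\pm\lambda^k$ cancel, giving $(1-\lambda^{3k})\,u = \lambda^{k-1}(1-\lambda^k)$. This isolates $u = \lambda^{k-1}(1-\lambda^k)/(1-\lambda^{3k})$, which is the first coordinate claimed. Back-substitution into $w = \lambda^k u$ immediately gives the third coordinate, and substitution into $v = \lambda^{2k}u + (1-\lambda^{k-1})$ followed by placing everything over the common denominator $1-\lambda^{3k}$ produces the second coordinate; the expansion of the numerator $\lambda^{3k-1}(1-\lambda^k) + (1-\lambda^{k-1})(1-\lambda^{3k})$ has a pair of $\lambda^{4k-1}$ terms that cancel, leaving exactly $1 - \lambda^{k-1} + \lambda^{3k-1} - \lambda^{3k}$.

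Since the system is linear with invertible matrix (the determinant $1-\lambda^{3k}$ is nonzero as $\lambda\in(0,1)$), the solution is unique, confirming that $w_k$ is indeed the unique fixed point of the affine contraction $P_k$. As a sanity check that $w_k$ lies in the affine hyperplane containing $\Delta$, one verifies $u+v+w=1$: the numerator telescopes to $1-\lambda^{3k}$, matching the denominator. There is no real obstacle here — the argument is a direct linear-algebra computation, and the only care needed is the bookkeeping of powers of $\lambda$ to recognize the cancellations that produce the compact form in the statement.
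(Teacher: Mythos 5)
Your proposal is correct and is essentially the paper's argument carried out in full: the paper simply asserts that it is straightforward to check $w_k=P_k(w_k)$, and your componentwise solution of the cyclic linear system (with the cancellations of $\pm\lambda^k$ and $\lambda^{4k-1}$, the uniqueness via $\det(I-\lambda^k S)=1-\lambda^{3k}\neq 0$, and the check $u+v+w=1$) is exactly that verification made explicit, indeed slightly more, since you derive the fixed point rather than merely confirm it.
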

\begin{proof}
It is straightforward to check that $w_k=P_k(w_k)$.
\end{proof}
The points $w_k$ may not be fixed points for the Poincar\'e map $P$ as one has to check that $w_k\in B_k$ and $B_k\neq\emptyset$. However, as the following lemma shows, $P$ has no other periodic points besides the fixed points of the branch maps of $P$. 

\begin{lem}\label{}
If $x\in B$ is a periodic point of $P$, then it has period one, i.e., it is a fixed point of $P$. Moreover, $x=w_k$ for some $k\in\Nn$.
\end{lem}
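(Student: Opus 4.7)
The plan is to show, using the monotonicity of itineraries established in Proposition~\ref{prop:monotone}, that the itinerary of any periodic point of $P$ must be constant. Once this is done, the orbit is confined to a single branch $B_{k^\ast}$, and the contraction property of $P|_{B_{k^\ast}}$ forces the point to equal the unique fixed point $w_{k^\ast}$.

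Let $x \in B$ be a periodic point of $P$ with period $q \geq 1$, and let $\boldsymbol{i}(x) = (i_0, i_1, i_2, \ldots)$ be its itinerary. Because $P^{k+q}(x) = P^k(x)$, the itinerary is itself $q$-periodic. First I would apply item (1) of Proposition~\ref{prop:monotone} to establish a dichotomy: if some $i_k \geq m$, then $i_{k+j} \geq m$ for all $j \geq 0$, and by periodicity this forces $i_k \geq m$ for every $k$. Hence either all $i_k \geq m$ or all $i_k < m$.

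Next I would pick an extremal index in each case and propagate the appropriate monotonicity clause. In the case all $i_k \geq m$, let $k_0$ be a position where $i_{k_0}$ is maximal; then $i_{k_0+1} \leq i_{k_0}$, and iterating item (2) of Proposition~\ref{prop:monotone} (whose hypothesis $i_{k+j} \geq m$ is automatic) shows the sequence is non-increasing from $k_0$ onward. Combined with $q$-periodicity, it must be constant. In the case all $i_k < m$, let $k_0$ be a position where $i_{k_0}$ is minimal; then $i_{k_0+1} \geq i_{k_0}$, and iterating item (3) of Proposition~\ref{prop:monotone} (whose hypothesis $i_{k+j} < m$ is automatic) shows the sequence is non-decreasing from $k_0+1$ onward, hence again constant by periodicity.

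Finally, with constant itinerary $i_k \equiv k^\ast$, every iterate $P^j(x)$ lies in $B_{k^\ast}$, and the restriction of $P$ to this orbit coincides with the affine contraction $P_{k^\ast}$ of ratio $\lambda^{k^\ast} < 1$ whose unique fixed point is $w_{k^\ast}$ (Lemma~\ref{lem: w_k}). From $P_{k^\ast}^q(x) = x$ the standard contraction estimate $\|P_{k^\ast}^q(x) - w_{k^\ast}\| \leq \lambda^{qk^\ast}\|x - w_{k^\ast}\|$ forces $x = w_{k^\ast}$, and consequently $P(x) = x$, so the period is one. The main technical point is the careful iteration of items (2) and (3) of Proposition~\ref{prop:monotone}, and the reason the preliminary ``$\geq m$'' versus ``$<m$'' split is needed is precisely to ensure the hypotheses of those items propagate along the whole orbit.
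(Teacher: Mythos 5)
Your proposal is correct and is essentially the paper's own argument: the paper proves this lemma by citing the stabilization of itineraries established in the proof of Theorem~\ref{th:main}, which rests on exactly the same ingredients you use, namely Proposition~\ref{prop:monotone} and the contraction of the branch maps $P|_{B_k}$. Your extremal-index trick (taking a maximal, resp.\ minimal, entry of the $q$-periodic itinerary to launch the monotonicity clauses) is a valid and slightly more self-contained way of running that same stabilization argument directly on a periodic orbit.
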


\begin{proof}
In the proof of Theorem~\ref{th:main} it is shown that the orbit of any regular point  converges, under the Poincar\'e map $P$, to a fixed point of $P$. Hence, any periodic point of $P$ has to be a fixed point of a branch map $P|_{B_k}$ for some $k\in\Nn$. 
\end{proof}

In the parameter plane $(\alpha,\lambda)\in \mathcal{P}:=\Rr^+\times(0,1)$ consider the regions $\mathcal{R}_k$, with $k\in\Nn$, defined by
$$
\mathcal{R}_k:=\left\{(\alpha,\lambda)\in\mathcal{P}\colon \lambda^k<\alpha\quad \text{and}\quad q_{\alpha,\lambda}(\lambda^k) <0\right\},
$$
where $q_{\alpha,\lambda}$ is the quadratic polynomial  
$$
q_{\alpha,\lambda}(x)=(\alpha-\lambda^{-1}(\alpha-1)) x^2+(\alpha- \lambda ^{-1}(2 \alpha +1))x+\alpha.
$$
\begin{lem}\label{lem: w_k fixed}
$w_k$ is a fixed point of $P$ if and only if $(\alpha,\lambda)\in \mathcal{R}_k$.
\end{lem}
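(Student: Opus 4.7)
The plan is to verify the equivalence by direct substitution, exploiting the fact that $w_k$ is already the unique fixed point of the affine extension $P_k$ of $P|_{B_k}$ to $\Rr^3$. Since $P$ agrees with $P_k$ exactly on $B_k$, the point $w_k$ is a fixed point of the Poincar\'e map $P$ if and only if $w_k\in B_k$. Using Lemma~\ref{lem:Bk and P} and the definitions of $B$ and $R_1$, the membership $w_k\in B_k$ is equivalent to the system of four linear inequalities
$$
1<u_\alpha\cdot w_k<\alpha(\lambda^{-1}-1)+1\qquad\text{and}\qquad b_{k-1}<S^2(u_\alpha)\cdot w_k<b_k.
$$
So the task reduces to translating these four inequalities into the two conditions defining $\mathcal{R}_k$.

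Before substituting, I would first rewrite $w_k$ in a more tractable form. Using $1-\lambda^{3k}=(1-\lambda^k)(1+\lambda^k+\lambda^{2k})$ and setting $N:=1+\lambda^k+\lambda^{2k}$, Lemma~\ref{lem: w_k} yields
$$
w_k=\tfrac{1}{N}\bigl(\lambda^{k-1},\,N-\lambda^{k-1}-\lambda^{2k-1},\,\lambda^{2k-1}\bigr).
$$
From this, the inner products $u_\alpha\cdot w_k$ and $S^2(u_\alpha)\cdot w_k$ become manageable rational expressions in $\lambda^k$, $\lambda^{2k}$, and $\lambda^{3k}$, and I can clear denominators in each of the four inequalities.

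The key (and pleasant) observation I expect to emerge is a double coincidence: the two upper bounds $u_\alpha\cdot w_k<\alpha(\lambda^{-1}-1)+1$ and $S^2(u_\alpha)\cdot w_k<b_k$ should both simplify, after multiplication by $\lambda N$ and collection of terms, to $(\lambda^k-1)(\lambda^k-\alpha)>0$, which (since $\lambda^k<1$) is just $\lambda^k<\alpha$; and the two lower bounds $u_\alpha\cdot w_k>1$ and $S^2(u_\alpha)\cdot w_k>b_{k-1}$ should both reduce to $-\lambda\, q_{\alpha,\lambda}(\lambda^k)>0$, i.e.\ $q_{\alpha,\lambda}(\lambda^k)<0$. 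Identifying these two collapses is the algebraic heart of the argument; once both are verified, the four inequalities collapse to the pair that defines $\mathcal{R}_k$, and the proof is complete.

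The main obstacle is purely the bookkeeping in these simplifications: recognizing the quadratic $q_{\alpha,\lambda}$ hidden inside the expression $(\alpha\lambda-(\alpha-1))\lambda^{2k}+(\alpha\lambda-(2\alpha+1))\lambda^k+\alpha\lambda$ that appears after clearing denominators. A useful sanity check during the computation is that the two upper-bound inequalities ought to give the same condition, and likewise for the two lower-bound ones --- geometrically because $P_k$ maps the strip $\{b_{k-1}<S^2(u_\alpha)\cdot x<b_k\}$ onto $B$, so its fixed point lies in $B$ iff it lies in that strip. This geometric fact provides both a guide to the calculation and a check on the final answer.
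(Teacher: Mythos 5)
Your approach is essentially the paper's: both reduce ``$w_k$ is a fixed point of $P$'' to the membership $w_k\in B_k$, substitute the explicit formula of Lemma~\ref{lem: w_k}, and translate the resulting linear inequalities into the two conditions defining $\mathcal{R}_k$. Your ``double coincidence'' is real and correctly predicted: after clearing denominators, both upper bounds $u_\alpha\cdot w_k<\alpha(\lambda^{-1}-1)+1$ and $S^2(u_\alpha)\cdot w_k<b_k$ reduce to $(\lambda^k-1)(\lambda^k-\alpha)>0$, and both lower bounds $u_\alpha\cdot w_k>1$ and $S^2(u_\alpha)\cdot w_k>b_{k-1}$ reduce to $\lambda\, q_{\alpha,\lambda}(\lambda^k)<0$; the paper's proof records exactly these equivalences, without remarking on the coincidence.

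There is, however, a genuine gap. Since $B_k\subset B\subset R_1$ and $R_1=\{x\in\Delta\colon u_\alpha\cdot x>1\ \land\ S(u_\alpha)\cdot x<1\}$, the membership $w_k\in B_k$ is governed by \emph{five} linear inequalities, not four: you dropped $S(u_\alpha)\cdot w_k<1$. Your four inequalities are necessary, so the ``only if'' direction stands, but the ``if'' direction --- $(\alpha,\lambda)\in\mathcal{R}_k\Rightarrow w_k\in B_k$ --- is incomplete without the fifth inequality; this is exactly the step the paper handles with ``The other inequality is proved in a similar way'', and it is where the possibility $B_k=\emptyset$ gets ruled out. The gap is fixable: writing $x=\lambda^k$ and $N=1+x+x^2$, and multiplying $S(u_\alpha)\cdot w_k<1$ through by $\lambda N>0$, the condition becomes $h(x):=\lambda+(\alpha-1+\lambda)x-(\alpha+2-\lambda)x^2>0$. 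The quadratic $h$ is concave (its leading coefficient is $-(\alpha+2-\lambda)<0$) with $h(0)=\lambda>0$, and one computes $h(\lambda)=\lambda(1-\lambda)(\alpha-\lambda)$ and $h(\alpha)=(1+\alpha+\alpha^2)(\lambda-\alpha)$. If $\lambda<\alpha$, then $h(\lambda)>0$, so by concavity $h>0$ on $[0,\lambda]$, which contains $\lambda^k$; if $\lambda\geq\alpha$, then $h(\alpha)\geq0$, so $h>0$ on $[0,\alpha)$, which contains $\lambda^k$ by the condition $\lambda^k<\alpha$ of $\mathcal{R}_k$. A second, smaller imprecision: your opening equivalence ``fixed point of $P$ iff $w_k\in B_k$'' implicitly excludes that $w_k$ is fixed by $P$ while lying in some other piece $B_j$; this is harmless, since that would force $w_k=w_j$ by uniqueness of the fixed point of the contraction $P_j$, which is impossible because the ratio of the third to the first coordinate of $w_j$ equals $\lambda^j$ and is injective in $j$.
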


\begin{proof}
By Lemma~\ref{lem:Bk and P}, we have that $w_k\in B_k$ if and only if
$$b_{k-1}<S^2(u_\alpha)\cdot w_k < b_k.$$
Using the expression in Lemma~\ref{lem: w_k} for $w_k$ it is straightforward to see that these two inequalities define the set $\mathcal{R}_k$. Since $B_k$ might be empty, it remains to show that in fact $w_k\in B$ whenever $(\alpha,\lambda)\in \mathcal{R}_k$. By the definition of $B$  we have that $w_k\in B$ if and only if
$$1<u_\alpha\cdot w_k < \lambda^{-1}\alpha -\alpha +1 \quad\text{and}\quad S(u_\alpha)\cdot w_k<1.$$
By the inequalities that define $\mathcal{R}_k$ it is straightforward to see that
$$q_{\alpha,\lambda}(\lambda^k) <0 \Leftrightarrow u_\alpha\cdot w_k >1 $$
and
$$ \lambda^k-\alpha<0 \Leftrightarrow u_\alpha\cdot w_k < \alpha\left(\lambda^{-1} -1\right) +1 .$$
The other inequality is proved in a similar way.
\end{proof}

The region $\mathcal{R}_1$ is defined by $\lambda<\alpha$ since the inequality $q_{\alpha,\lambda}(\lambda)<0$ holds true for every $(\alpha,\lambda)\in\mathcal{P}$. Regarding the second region $\mathcal{R}_2$ we have,
$$
\mathcal{R}_2=\left\{(\alpha,\lambda)\in \mathcal{P}\colon \lambda^2<\alpha \wedge \left(\lambda\geq \lambda_* \vee \alpha< \frac{\lambda(1+\lambda)}{1-\lambda-\lambda^3}\right)\right\},
$$
where $\lambda_*=0.682328...$ is the unique real root\footnote{The inverse of the supergolden ratio.} of the polynomial $1-\lambda-\lambda^3$.
For the remaining regions, $k\geq3$, we have
$$
\mathcal{R}_k=\left\{(\alpha,\lambda)\in \mathcal{P}\colon\lambda^k<\alpha<\frac{\lambda^{k-1}(1-\lambda^k)}{1-\lambda^{k-1}-(1-\lambda)(\lambda^{2k-1}+\lambda^{k-1})}\right\}.
$$
Using this description of the regions $\mathcal{R}_k$ we obtain Figure~\ref{fig:regions_R_k}.

\begin{figure}[h]
\centering{\includegraphics[width=10cm]{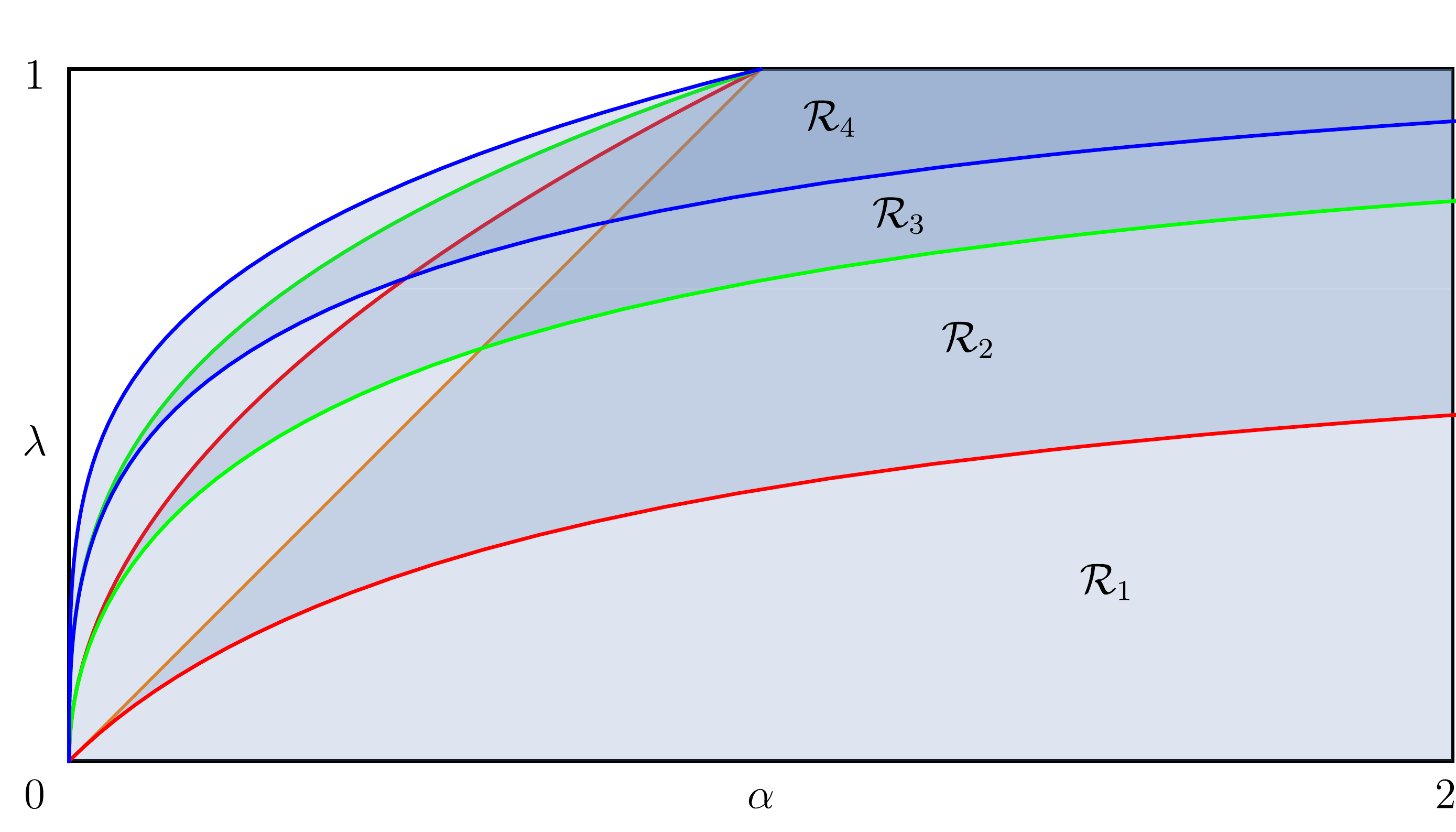}}
\caption{\footnotesize{Plot of the regions $\mathcal{R}_1,\ldots,\mathcal{R}_4$ inside the rectangle $(0,2]\times(0,1)\subset \mathcal{P}$. The region $\mathcal{R}_1$ is delimited by the brown straight line, the region $\mathcal{R}_2$ is delimited by the red curves, the region $\mathcal{R}_3$ is delimited by the green curves and the region $\mathcal{R}_4$ is delimited by the blue curves. In the parameter plane $\mathcal{P}$ only the regions $\mathcal{R}_1$ and $\mathcal{R}_2$ are unbounded.}}
\label{fig:regions_R_k}
\end{figure}

Let 
\begin{equation}\label{r}
r_\alpha(\lambda):=\frac{1+2\alpha-\alpha\lambda-\sqrt{\alpha ^2 \left(4-3 \lambda ^2\right)+\alpha  (4-6 \lambda )+1}}{2 (\alpha  \lambda -\alpha +1)}.
\end{equation}
\begin{lem}\label{lem:r}For every $(\alpha,\lambda)\in\mathcal{P}$,
\begin{enumerate}
\item $0<r_\alpha(\lambda)<\lambda$ and $r_\alpha(\lambda)<\alpha\lambda$;
\item $r_\alpha(\lambda)$ is the unique root of $q_{\alpha,\lambda}$ in the interval $(0,1)$;
\item $q_{\alpha,\lambda}(x)<0$ for $x\in(0,1)$  if and only if $x>r_\alpha(\lambda)$.
\end{enumerate}
\end{lem}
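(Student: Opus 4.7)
My plan is to analyze the quadratic $q_{\alpha,\lambda}$ by evaluating it at the four test points $0,\lambda,\alpha\lambda,1$ and then combining this sign information with the sign of its leading coefficient.

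First I compute
\begin{align*}
q_{\alpha,\lambda}(0)&=\alpha, \\
q_{\alpha,\lambda}(\lambda)&=-(1-\lambda)\bigl(1+\alpha(1+\lambda)\bigr), \\
q_{\alpha,\lambda}(\alpha\lambda)&=-\alpha^{2}(1-\lambda)(2+\alpha\lambda), \\
q_{\alpha,\lambda}(1)&=-3\alpha\lambda^{-1}(1-\lambda).
\end{align*}
For $(\alpha,\lambda)\in\mathcal{P}$ the first value is strictly positive and the other three are strictly negative. Next I verify that the discriminant of $q_{\alpha,\lambda}$ (which, up to the positive factor $\lambda^{-2}$, equals the radicand $R(\alpha,\lambda):=\alpha^{2}(4-3\lambda^{2})+\alpha(4-6\lambda)+1$ from~\eqref{r}) is strictly positive throughout $\mathcal{P}$: treating $R$ as a quadratic in $\alpha$, its own discriminant is $(4-6\lambda)^{2}-4(4-3\lambda^{2})=48\lambda(\lambda-1)<0$ and $R(0,\lambda)=1>0$. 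Hence $q_{\alpha,\lambda}$ always has two distinct real roots.

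To locate these roots I split on the sign of the leading coefficient, which is $\lambda^{-1}(\alpha\lambda-\alpha+1)$. When $\alpha(1-\lambda)<1$ the quadratic is convex and negative strictly between its roots $r_{1}<r_{2}$; combining $q(0)>0$ with $q(1)<0$ forces $0<r_{1}<1<r_{2}$, and the quadratic formula identifies $r_{1}$ with the expression~\eqref{r}. When $\alpha(1-\lambda)>1$ the quadratic is concave and positive between its roots; now $q(0)>0$ and $q(1)<0$ yield $r_{1}<0<r_{2}<1$, and since the denominator of~\eqref{r} has flipped sign, the same formula selects $r_{2}$. The degenerate case $\alpha(1-\lambda)=1$ reduces $q$ to an affine polynomial with a single root in $(0,1)$, consistent with the corresponding limit of~\eqref{r}. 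This establishes~(2). Conclusion~(3) follows immediately: with a unique root $r_\alpha(\lambda)$ in $(0,1)$ and $q>0$ at the left endpoint while $q<0$ at the right, continuity forces $q_{\alpha,\lambda}(x)<0\Leftrightarrow x>r_\alpha(\lambda)$ on $(0,1)$. Finally~(1) drops out: $r_\alpha(\lambda)>0$ is already established; $r_\alpha(\lambda)<\lambda$ follows from $q(\lambda)<0$ together with~(3) and $\lambda<1$; and $r_\alpha(\lambda)<\alpha\lambda$ holds either trivially (via $r_\alpha(\lambda)<1\le\alpha\lambda$) when $\alpha\lambda\ge 1$, or from $q(\alpha\lambda)<0$ and~(3) when $\alpha\lambda<1$.

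The main obstacle is purely bookkeeping: one has to verify that the particular sign combination hard-wired into the closed-form formula~\eqref{r} picks out the root lying in $(0,1)$ in both the convex and concave regimes (where the denominator has opposite signs). Once this case split is handled, the rest of the argument is a direct consequence of the four endpoint evaluations above.
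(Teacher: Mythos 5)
Your proof is correct and complete; in fact the paper offers no argument at all here (the authors write only ``We omit the proof as it is a straightforward analysis of the function $r_\alpha$''), so your endpoint-evaluation scheme supplies exactly the missing details. All four evaluations check out ($q_{\alpha,\lambda}(0)=\alpha$, $q_{\alpha,\lambda}(\lambda)=-(1-\lambda)(1+\alpha(1+\lambda))$, $q_{\alpha,\lambda}(\alpha\lambda)=-\alpha^2(1-\lambda)(2+\alpha\lambda)$, $q_{\alpha,\lambda}(1)=-3\alpha\lambda^{-1}(1-\lambda)$), the discriminant computation $\lambda^{2}(B^{2}-4AC)=\alpha^{2}(4-3\lambda^{2})+\alpha(4-6\lambda)+1=R(\alpha,\lambda)>0$ is right, and your case split on the sign of the leading coefficient $\lambda^{-1}(\alpha\lambda-\alpha+1)$ correctly identifies which root of the quadratic formula the expression \eqref{r} selects in each regime ($r_1\in(0,1)<r_2$ when convex, $r_1<0<r_2\in(0,1)$ when concave). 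The only point deserving more care is the degenerate curve $\alpha(1-\lambda)=1$: there the formula \eqref{r} is literally $0/0$ (one checks $R=(\alpha+2)^{2}$ on that curve, so the numerator vanishes along with the denominator), and your appeal to ``the corresponding limit'' is the right reading but is really a removable-singularity claim about the paper's definition rather than a consequence of it. A cleaner fix, which also unifies your two sign regimes and makes $r_\alpha(\lambda)>0$ immediate, is to rationalize the numerator: one computes $(1+2\alpha-\alpha\lambda)^{2}-R=4\alpha\lambda(\alpha\lambda-\alpha+1)$, whence $r_\alpha(\lambda)=\frac{2\alpha\lambda}{1+2\alpha-\alpha\lambda+\sqrt{R(\alpha,\lambda)}}$, an expression manifestly well defined and positive on all of $\mathcal{P}$, including the curve where the leading coefficient vanishes; with that form in hand, your sign table at $0,\lambda,\alpha\lambda,1$ yields (1)--(3) exactly as you argued.
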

\begin{proof}
We omit the proof as it is a straightforward analysis of the function $r_\alpha$.
\end{proof}
From this lemma it is immediate that 
$$
\mathcal{R}_k=\{(\alpha,\lambda)\in\mathcal{P}\colon r_\alpha(\lambda)<\lambda^k<\alpha\},\quad k\in\Nn.
$$
Therefore, the fixed points of $P$ appear consecutively, in the sense of the following lemma.

\begin{lem}\label{lem:consec}
If $w_k$ and $w_{k'}$ are fixed points of $P$ with $k\leq k'$, then $w_j$ is a fixed point of $P$ for every $k\leq j\leq k'$.
\end{lem}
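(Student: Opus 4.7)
The plan is to exploit the very clean characterization of the parameter regions $\mathcal{R}_k$ established just before the lemma, namely
$$
\mathcal{R}_k=\{(\alpha,\lambda)\in\mathcal{P}\colon r_\alpha(\lambda)<\lambda^k<\alpha\},
$$
together with the monotonicity of $k\mapsto \lambda^k$. By Lemma~\ref{lem: w_k fixed}, assuming that both $w_k$ and $w_{k'}$ are fixed points of $P$ is exactly the statement that $(\alpha,\lambda)\in\mathcal{R}_k\cap\mathcal{R}_{k'}$, so we have the four inequalities $r_\alpha(\lambda)<\lambda^k<\alpha$ and $r_\alpha(\lambda)<\lambda^{k'}<\alpha$.

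Now I would observe that because $\lambda\in(0,1)$, the map $j\mapsto\lambda^j$ is strictly decreasing. Hence for any integer $j$ with $k\leq j\leq k'$ we get the sandwich
$$
r_\alpha(\lambda)<\lambda^{k'}\leq\lambda^{j}\leq\lambda^{k}<\alpha,
$$
where the outer strict inequalities come from the assumptions and the inner inequalities from monotonicity. This places $(\alpha,\lambda)$ inside $\mathcal{R}_j$, and so Lemma~\ref{lem: w_k fixed} gives that $w_j$ is a fixed point of $P$, which is exactly the conclusion.

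There is essentially no obstacle here beyond having the correct characterization of $\mathcal{R}_k$ in the uniform form $r_\alpha(\lambda)<\lambda^k<\alpha$; the individual descriptions of $\mathcal{R}_1$, $\mathcal{R}_2$ and $\mathcal{R}_k$ for $k\geq 3$ given earlier would make the inclusion $\mathcal{R}_k\cap\mathcal{R}_{k'}\subset\mathcal{R}_j$ awkward to verify case by case, so the main preparatory step -- already carried out via Lemma~\ref{lem:r} -- is what makes this a one-line argument. No dynamical input (no use of the Poincar\'e map itself, the skew-product, or Proposition~\ref{prop:monotone}) is needed at this stage: the consecutiveness is purely a property of the parameter-space stratification.
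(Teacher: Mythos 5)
Your proof is correct and is essentially identical to the paper's: both translate the hypothesis via Lemma~\ref{lem: w_k fixed} into $(\alpha,\lambda)\in\mathcal{R}_k\cap\mathcal{R}_{k'}$ and then use the uniform description $\mathcal{R}_j=\{(\alpha,\lambda)\colon r_\alpha(\lambda)<\lambda^j<\alpha\}$ together with the monotonicity of $j\mapsto\lambda^j$ to obtain the sandwich $r_\alpha(\lambda)<\lambda^{k'}\leq\lambda^j\leq\lambda^k<\alpha$. Your closing remark is also accurate: the substance lives in the preparatory reduction via Lemma~\ref{lem:r}, after which the lemma is a one-line parameter-space observation, exactly as in the paper.
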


\begin{proof}
It follows from Lemma~\ref{lem: w_k fixed} that, given $(\alpha,\lambda)\in\mathcal{R}_k\cap \mathcal{R}_{k'}$ with $k\leq k'$, we have to show that $(\alpha,\lambda)\in\mathcal{R}_{j}$ for every $k\leq j\leq k'$. Indeed, since $k\leq j\leq k'$ we have $\lambda^j\leq \lambda^k<\alpha$ and $r_\alpha(\lambda)<\lambda^{k'}\leq \lambda^j$. This shows that $(\alpha,\lambda)\in \mathcal{R}_j$.
\end{proof}


Since the fixed points of $P$ form a chain in $B$, we denote by $h_\alpha(\lambda)$ the \textit{head} of the chain, i.e., the first $k\in\Nn$ for which $w_k$ is a fixed point of $P$,
$$
h_\alpha(\lambda):=\min\{k\in\Nn\colon (\alpha,\lambda)\in \mathcal{R}_k\}.
$$
Similarly, we define $t_\alpha(\lambda)$ to be the \textit{tail} of the chain, i.e., 
$$
t_\alpha(\lambda):=\max\{k\in\Nn\colon (\alpha,\lambda)\in \mathcal{R}_k\}.
$$
By Lemma~\ref{lem:consec}, the number of distinct fixed points of $P$ is
$$N_\alpha(\lambda):=t_\alpha(\lambda)-h_\alpha(\lambda)+1.$$
The following lemma gives explicit formulas for the head and tail functions and describes their asymptotic behaviour.
\begin{lem}\label{lem:ht}
For every $\alpha>0$, the head and tail functions are monotonically increasing in $\lambda$. Moreover,
\begin{enumerate}
\item 
$$
h_\alpha(\lambda)=m_\alpha(\lambda)=\begin{cases}1,&\alpha\geq1\\
\left\lfloor \log_\lambda{\alpha}\right\rfloor+1,&\alpha<1
\end{cases};
$$
\item 
$$
t_\alpha(\lambda)=\left\lceil\log_\lambda r_\alpha(\lambda)\right\rceil-1;
$$
\item 
$$
\lim_{\lambda\to 0}N_\alpha(\lambda)=1;
$$
\item if $\alpha\geq 1$, then
$$ \lim_{\lambda\to 1} N_\alpha(\lambda)=\begin{cases}\left\lceil\frac{3\alpha}{\alpha-1}\right\rceil-1,& \alpha>1\\\infty,&\alpha=1\end{cases};$$
\item if $\alpha<1$, then
\begin{align*}
\liminf_{\lambda\to 1}N_\alpha(\lambda) &= \left\lceil\frac{1+\alpha+\alpha^2}{1-\alpha}\right\rceil-1,\\
 \limsup_{\lambda\to 1}N_\alpha(\lambda)&= \left\lceil\frac{1+\alpha+\alpha^2}{1-\alpha}\right\rceil.
\end{align*}
\end{enumerate}
\end{lem}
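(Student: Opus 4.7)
The plan rests on the characterization (immediate from Lemma~\ref{lem:r}) that
$$\mathcal{R}_k=\{(\alpha,\lambda)\in\mathcal{P}\colon r_\alpha(\lambda)<\lambda^k<\alpha\},\quad k\in\Nn.$$
Since $k\mapsto\lambda^k$ is strictly decreasing on $\Nn$, the head $h_\alpha(\lambda)$ is pinned down by the upper inequality $\lambda^k<\alpha$ and the tail $t_\alpha(\lambda)$ by the lower inequality $\lambda^k>r_\alpha(\lambda)$. For (1), the smallest $k$ with $\lambda^k<\alpha$ is $m_\alpha(\lambda)$ by definition, so I need only check that $r_\alpha(\lambda)<\lambda^{m_\alpha(\lambda)}$ holds automatically. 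When $\alpha\geq 1$ this reads $r_\alpha(\lambda)<\lambda$, which is Lemma~\ref{lem:r}(1); when $\alpha<1$, the definition of $m_\alpha$ gives $\lambda^{m_\alpha}\geq \alpha\lambda$, and then $\alpha\lambda>r_\alpha(\lambda)$ by the same lemma. For (2), inverting $\lambda^k>r_\alpha(\lambda)$ using $\log_\lambda$ (order-reversing since $\log\lambda<0$) yields the stated closed form. Monotonicity of $h_\alpha$ in $\lambda$ is visible from its formula; monotonicity of $t_\alpha$ reduces to the monotonicity of $\lambda\mapsto \log r_\alpha(\lambda)/\log\lambda$, which I would establish by implicit differentiation of $q_{\alpha,\lambda}(r_\alpha(\lambda))=0$.

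For (3), Taylor-expanding~\eqref{r} at $\lambda=0$ gives
$$r_\alpha(\lambda)=\frac{\alpha}{2\alpha+1}\lambda+O(\lambda^2),$$
so $\log_\lambda r_\alpha(\lambda)=1+\log(\alpha/(2\alpha+1))/\log\lambda\to 1^+$ as $\lambda\to 0^+$, i.e.\ $t_\alpha(\lambda)=1$ for all sufficiently small $\lambda$. The head is also eventually $1$: trivially when $\alpha\geq 1$, and when $\alpha<1$ because $\log_\lambda\alpha=\log\alpha/\log\lambda\to 0^+$. Therefore $N_\alpha(\lambda)=1$ eventually, proving (3).

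For (4) and (5) the essential input is the behaviour of $r_\alpha$ near $\lambda=1$, governed by $q_{\alpha,1}(x)=(x-1)(x-\alpha)$, so that $r_\alpha(1)=\min\{1,\alpha\}$. In each regime I would implicitly differentiate $q_{\alpha,\lambda}(r_\alpha(\lambda))=0$ at the appropriate base point to read off $r_\alpha'(1)$. When $\alpha>1$, the root $r_\alpha(1)=1$ is simple and a direct computation gives $\partial_\lambda q=3\alpha$ and $\partial_x q=1-\alpha$ at $(x,\lambda)=(1,1)$, hence $r_\alpha'(1)=3\alpha/(\alpha-1)$; L'H\^opital then yields $\log_\lambda r_\alpha(\lambda)\to 3\alpha/(\alpha-1)$, and combining with $h_\alpha\equiv 1$ proves (4) in this case. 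When $\alpha=1$ the root is double; the ansatz $r_1(\lambda)=1-c\sqrt{1-\lambda}+o(\sqrt{1-\lambda})$ plugged into $q_{1,\lambda}$ forces $c=\sqrt{3}$, whence $\log_\lambda r_1(\lambda)\sim\sqrt{3/(1-\lambda)}\to\infty$. When $\alpha<1$, evaluation at $(x,\lambda)=(\alpha,1)$ gives $\partial_\lambda q=\alpha(\alpha^2+\alpha+1)$ and $\partial_x q=\alpha-1$, so $r_\alpha'(1)=\alpha(1+\alpha+\alpha^2)/(1-\alpha)$. A first-order expansion then shows
$$\log_\lambda r_\alpha(\lambda)-\log_\lambda\alpha\longrightarrow D:=\frac{1+\alpha+\alpha^2}{1-\alpha}\quad\text{as }\lambda\to 1.$$
Since $N_\alpha(\lambda)=\lceil\log_\lambda r_\alpha(\lambda)\rceil-\lfloor\log_\lambda\alpha\rfloor-1$ and the two terms individually diverge while their difference tends to $D$, the value $N_\alpha$ alternates between $\lfloor D\rfloor$ and $\lfloor D\rfloor+1$ according to the fractional part $\{\log_\lambda\alpha\}$, delivering the claimed $\liminf=\lceil D\rceil-1$ and $\limsup=\lceil D\rceil$.

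The principal obstacle is the last step of (5): since $h_\alpha$ and $t_\alpha$ both blow up as $\lambda\to 1$ while their difference stays bounded, I have to bookkeep carefully how the fractional parts of $\log_\lambda r_\alpha(\lambda)$ and $\log_\lambda\alpha$ interact, and verify that $\{\log\alpha/\log\lambda\}$ densely fills $[0,1)$ as $\lambda\to 1^-$ so that both extremes are attained. A secondary difficulty is the monotonicity of $\log_\lambda r_\alpha(\lambda)$ in $\lambda$, not transparent from the quadratic $q_{\alpha,\lambda}$ and requiring a careful implicit calculation.
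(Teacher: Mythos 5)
Your proposal is correct and follows essentially the same route as the paper: both rest on the characterization $\mathcal{R}_k=\{(\alpha,\lambda)\colon r_\alpha(\lambda)<\lambda^k<\alpha\}$ and the inequalities $r_\alpha(\lambda)<\lambda$, $r_\alpha(\lambda)<\alpha\lambda$ of Lemma~\ref{lem:r} to get the head/tail formulas, and both obtain the $\lambda\to1$ limits from the same implicit-derivative values $r_\alpha'(1)=\frac{3\alpha}{\alpha-1}$ (resp.\ $\frac{r_\alpha'(1)}{\alpha}=\frac{1+\alpha+\alpha^2}{1-\alpha}$) via L'H\^opital together with the floor/ceiling sandwich. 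If anything, you are more careful than the paper at two points it glosses over: the square-root ansatz $r_1(\lambda)=1-\sqrt{3(1-\lambda)}+o\bigl(\sqrt{1-\lambda}\bigr)$ for $\alpha=1$, where $r_\alpha'(1)$ does not exist and the paper's L'H\^opital step is only formal, and the fractional-part bookkeeping needed in item (5) to upgrade the two-sided bound on $N_\alpha$ to the exact $\liminf$/$\limsup$, which the paper dispatches with ``from which the conclusion follows.''
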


\begin{proof}
The monotonicity of $h_\alpha$ and $t_\alpha$ follows from items (1) and (2). Now we prove the claim in each item:

\begin{enumerate}
\item Given $(\alpha,\lambda)\in\mathcal{P}$, the head $h_\alpha(\lambda)$ is the smallest natural number $k$ such that $k\in I_{\alpha,\lambda}:=(\log_\lambda\alpha,\log_\lambda{r_\alpha(\lambda)})$. By the 2nd inequality of Lemma~\ref{lem:r}, the open interval $I_{\alpha,\lambda}$ has length greater than 1. Hence, 
$$
h_\alpha(\lambda)=\max\{1,\left\lfloor \log_\lambda{\alpha}\right\rfloor+1\}.
$$
\item Similarly, the tail $t_\alpha(\lambda)$ is the greatest natural number $k$ such that $k\in I_{\alpha,\lambda}$. Because $r_\alpha(\lambda)<\lambda$ by Lemma~\ref{lem:r}, we have $\log_\lambda r_\alpha(\lambda)>1$. Thus $t_\alpha(\lambda)=\left\lceil\log_\lambda r_\alpha(\lambda)\right\rceil-1$.
\item Immediate from (1) and (2). 
\item When $\alpha\geq1$, $$N_\alpha(\lambda)=\left\lceil\log_\lambda r_\alpha(\lambda)\right\rceil-1.$$ Notice that $r_\alpha(1)=1$. Taking the limit using L'H\^opital's rule we get
$$
\lim_{\lambda\to 1}\frac{\log{r_\alpha(\lambda)}}{\log\lambda}=r_\alpha'(1)=\frac{3\alpha}{\alpha-1}.
$$
\item When $\alpha<1$, 
$$
N_\alpha(\lambda)=\left\lceil\log_\lambda r_\alpha(\lambda)\right\rceil-\left\lfloor \log_\lambda{\alpha}\right\rfloor-1.
$$
By the properties\footnote{For every $x\in\Rr$, $x\leq \lfloor x\rfloor<x+1$ and $x-1<\lceil x \rceil\leq x$.} of the floor and ceiling functions, we have the lower and upper bound
$$
\log_\lambda (r_\alpha(\lambda)/\alpha)-1\leq N_\alpha(\lambda)<\log_\lambda (r_\alpha(\lambda)/\alpha)+1.
$$
Notice that $r_\alpha(1)=\alpha$. Again, by  L'H\^opital's rule we get
$$
\lim_{\lambda\to1}\frac{\log (r_\alpha(\lambda)/\alpha)}{\log\lambda}=\frac{r_\alpha'(1)}{\alpha}=\frac{1+\alpha+\alpha^2}{1-\alpha},
$$
from which the conclusion follows.
\end{enumerate}
\end{proof}

Let $\Upsilon_k$ denote the periodic orbit of $T$ passing through $w_k$. By Lemma~\ref{lem: w_k fixed}, such orbit exists whenever $(\alpha,\lambda)\in \mathcal{R}_k$. Moreover, according to Lemma~\ref{le:periodic}, the periodic orbit $\Upsilon_k$ has period $3k$, since $\sigma^k(w_k)=1$. 

We summarize the discussion of this section in the following theorem that, together with Lemma~\ref{lem:ht}, completely characterizes the bifurcations of the attractor of $T$ and its limit as $\lambda\to1$. 

Denote by $\Lambda_\alpha(\lambda)$  the attractor of $T$. 
\begin{thm}\label{thm:bif}
For every $(\alpha,\lambda)\in\mathcal{P}$, the attractor $\Lambda_\alpha(\lambda)$ is the union of $N_\alpha(\lambda)$ distinct periodic orbits $\Upsilon_{k}$ for $k= h_\alpha(\lambda),\ldots, t_\alpha(\lambda)$. Each periodic orbit $\Upsilon_k$ has period $3k$. 

Moreover, in the symmetric case ($\alpha=1$) the number of periodic orbits $N_{\alpha}(\lambda)$ grows to infinity as $\lambda\to 1$ whereas, in the non-symmetric case ($\alpha\neq 1$), the number of periodic orbits $N_\alpha(\lambda)$ stays bounded as $\lambda\to1$, i.e.,
$$
\lim_{\lambda\to1}N_\alpha(\lambda)= \left\lceil\frac{3\alpha}{\alpha-1}\right\rceil-1,\quad \alpha>1
$$
and
$$
\liminf_{\lambda\to 1}N_\alpha(\lambda) = \left\lceil\frac{1+\alpha+\alpha^2}{1-\alpha}\right\rceil-1,\quad
 \limsup_{\lambda\to 1}N_\alpha(\lambda)= \left\lceil\frac{1+\alpha+\alpha^2}{1-\alpha}\right\rceil,
 $$
when $\alpha<1$.
Regarding the limit of the attractor as $\lambda\to1$ we have,
$$
\lim_{\lambda\to1} \Lambda_\alpha(\lambda)=\begin{cases}E,&\alpha\geq1\\S_\alpha,&\alpha<1\end{cases},
$$
where $E=\left(\frac13,\frac13,\frac13\right)$ is the Nash equilibrium of the game and $S_\alpha$ the Shapley triangle, i.e., the triangle with vertices $\left\{v_\alpha, S(v_\alpha),S^2(v_\alpha)\right\}$ where $v_\alpha=\frac{1}{1+\alpha+\alpha^2}(\alpha,1,\alpha^2)$. The limit in the previous expression is interpreted in the Hausdorff metric.
\end{thm}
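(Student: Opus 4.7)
The first two assertions follow by assembling earlier results. In the proof of Theorem~\ref{th:main} it is established that the attractor of $f$ equals the finite set of fixed points of the Poincar\'e map $P$, and every such fixed point has the form $w_k$ for some $k$. Lemma~\ref{lem: w_k fixed} identifies the admissible $w_k$ as those with $(\alpha,\lambda)\in\mathcal{R}_k$, and Lemma~\ref{lem:consec} ensures that the set of admissible indices is the consecutive block $\{h_\alpha(\lambda),\ldots,t_\alpha(\lambda)\}$ of cardinality $N_\alpha(\lambda)$. Each such $w_k$ lifts, via the conjugacy of Lemma~\ref{lem:conjugated}, to a periodic orbit $\Upsilon_k$ of $T$. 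To compute its period I would use that $w_k\in B_k$ has $f$-return time exactly $k$, so the $f$-period of $w_k$ equals $k$ and its $f$-itinerary is $(\sigma(w_k),\sigma(f(w_k)),\ldots,\sigma(f^{k-1}(w_k)))=(1,0,\ldots,0)$; in particular $\sigma^k(w_k)=1\neq 0$, and Lemma~\ref{le:periodic} gives period $3k$ for $\Upsilon_k$. The asymptotic formulas for $N_\alpha(\lambda)$ as $\lambda\to 1$ in the three cases $\alpha>1$, $\alpha=1$, $\alpha<1$ are just items~(4) and~(5) of Lemma~\ref{lem:ht}.

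To analyse the Hausdorff limit of $\Lambda_\alpha(\lambda)$, I plan to combine the closed form for $w_k$ given by Lemma~\ref{lem: w_k} with the iteration formula $f^j(w_k)=\lambda^j S(w_k)+\lambda^{j-1}(1-\lambda)e_1+(1-\lambda^{j-1})e_2$ for $1\leq j\leq k$, derived exactly as in the proof of Lemma~\ref{lem:Bk and P}. The orbit $\Upsilon_k$ is then the $S$-orbit of $\{w_k,f(w_k),\ldots,f^{k-1}(w_k)\}$. Setting $\xi:=\lambda^k$, the inclusion $w_k\in B_k$ forces $\xi\in(r_\alpha(\lambda),\alpha]$, and a direct calculation shows $r_\alpha(1)=1$ when $\alpha\geq 1$ and $r_\alpha(1)=\alpha$ when $\alpha<1$.

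When $\alpha\geq 1$, the interval $(r_\alpha(\lambda),\alpha]$ collapses to $\{1\}$ as $\lambda\to 1$; the admissible range for $k$ satisfies $k(1-\lambda)\to 0$ (this is trivial for $\alpha>1$ since $N_\alpha$ is bounded, and for $\alpha=1$ it follows from $N_\alpha(\lambda)\sim\sqrt{3/(1-\lambda)}$). Substituting into Lemma~\ref{lem: w_k} one finds $w_k\to E$, and since $j(1-\lambda)\to 0$ for all $j\leq k$, every iterate $f^j(w_k)\to E$; hence $\Lambda_\alpha(\lambda)\to\{E\}$ in Hausdorff distance. When $\alpha<1$, instead $\lambda^k\to\alpha$ for every admissible $k$, so $w_k\to v_\alpha$ by Lemma~\ref{lem: w_k}; but the smallest admissible $k$ grows like $(-\log\alpha)/(1-\lambda)$, and for each $\tau\in[0,1]$ one can select $j=j(\lambda)\leq k$ with $j/k\to\tau$, along which $\lambda^j\to\alpha^\tau$ and therefore $f^j(w_k)\to\alpha^\tau S(v_\alpha)+(1-\alpha^\tau)e_2$. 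A short algebraic check identifies the curve $\tau\mapsto\alpha^\tau S(v_\alpha)+(1-\alpha^\tau)e_2$ for $\tau\in[0,1]$ as the edge of the Shapley triangle from $S(v_\alpha)$ (at $\tau=0$) to $v_\alpha$ (at $\tau=1$); the remaining two edges come from the $S$-symmetry, giving $S_\alpha$ as the Hausdorff limit.

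The main obstacle I anticipate is the $\alpha<1$ case, where the orbit periods tend to infinity and Hausdorff convergence requires both inclusions: every orbit point must remain uniformly close to $S_\alpha$, and every point of $S_\alpha$ must be approximated by some orbit point. The first inclusion follows from the closed-form expression for $f^j(w_k)$ with error term uniformly small in $j$ and $k$; the second requires a density argument exploiting that $k\to\infty$ makes the sample $\{j/k:0\leq j\leq k\}$ dense in $[0,1]$, combined with continuity of the parameterizing curve in $\tau$.
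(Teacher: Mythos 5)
Your proposal is correct, and for the structural claims it follows the paper verbatim: the attractor of $f$ is the set of branch fixed points $w_k$ (proof of Theorem~\ref{th:main}), admissibility is governed by $(\alpha,\lambda)\in\mathcal{R}_k$ (Lemma~\ref{lem: w_k fixed}), consecutiveness by Lemma~\ref{lem:consec}, the period $3k$ from the itinerary $(1,0,\ldots,0)$, $\sigma^k(w_k)=1$ and Lemma~\ref{le:periodic}, and the limits of $N_\alpha(\lambda)$ from items (4)--(5) of Lemma~\ref{lem:ht}. Where you genuinely diverge is the Hausdorff limit, and there your argument is more complete than the paper's. The paper's proof only computes limits of the fixed points themselves: $w_k\to E$ for $\alpha\geq1$, and $\lim_{\lambda\to1}w_{\log_\lambda\alpha}=\lim_{\lambda\to1}w_{\log_\lambda r_\alpha(\lambda)}=v_\alpha$ for $\alpha<1$; it never addresses why the full attractor, made of orbits whose periods $3k$ tend to infinity when $\alpha<1$, converges to the \emph{whole} triangle $S_\alpha$ rather than to its vertices. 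Your orbit parameterization $f^j(w_k)=\lambda^jS(w_k)+\lambda^{j-1}(1-\lambda)e_1+(1-\lambda^{j-1})e_2$, the identification of the limit curve $\tau\mapsto\alpha^\tau S(v_\alpha)+(1-\alpha^\tau)e_2$ as the edge from $S(v_\alpha)$ to $v_\alpha$ (indeed $\alpha S(v_\alpha)+(1-\alpha)e_2=v_\alpha$, as one checks directly), the uniformity over admissible $k$ coming from $\lambda^k\in\left(r_\alpha(\lambda),\alpha\right)$ with $r_\alpha(\lambda)\to\alpha$, and the density of the geometric sample $\{\lambda^j\colon 0\leq j\leq k\}$ in $[\alpha,1]$ as $k\to\infty$ supply exactly the two inclusions that Hausdorff convergence requires and that the paper leaves implicit; the $S$-symmetry then yields the other two edges. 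Two harmless slips: the admissible interval is open, $\lambda^k\in(r_\alpha(\lambda),\alpha)$, not half-open; and for $\alpha>1$ the constraint $\lambda^k<\alpha$ is vacuous (since $\lambda^k<1$), so the collapse of the admissible values of $\lambda^k$ to $\{1\}$ comes solely from $r_\alpha(\lambda)\to1$. Your estimate $N_1(\lambda)\sim\sqrt{3/(1-\lambda)}$, used to get $k(1-\lambda)\to0$ in the symmetric case, also checks out against the explicit formula for $N_1$.
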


\begin{proof}
It remains to prove the claim regarding the limit of the attractor.  In the case $\alpha\geq1$ we have $h_\alpha(\lambda)=1$ and $t_\alpha(\lambda)\leq  \frac{3\alpha}{\alpha-1}$. Using the expression in Lemma~\ref{lem: w_k fixed} we get,
$$
\lim_{\lambda\to1} w_{\frac{3\alpha}{\alpha-1}}=\left(\frac13,\frac13,\frac13\right).
$$
When $\alpha<1$, we have by Lemma~\ref{lem:ht},
$$
\log_\lambda\alpha\leq h_\alpha(\lambda)\leq t_\alpha(\lambda)\leq \log_\lambda r_\alpha(\lambda).
$$
Using again the expression in Lemma~\ref{lem: w_k fixed} we conclude that,
$$
\lim_{\lambda\to1} w_{\log_\lambda\alpha}=\lim_{\lambda\to1}w_{\log_\lambda r_\alpha(\lambda)}=\frac{1}{1+\alpha+\alpha^2}(\alpha,1,\alpha^2).
$$
This completes the proof of the theorem.
\end{proof}

\subsection{Phase portraits and basins of attraction}
Notice that the number $N(\varepsilon)$ of periodic strategies referred in the introduction equals $N_\alpha(\lambda)$,
where $\lambda =1-\varepsilon$ as defined in Section~\ref{sec:prelim}.

By Lemma~\ref{lem:ht},
the number of periodic orbits of the map $T$ in the symmetric case ($\alpha=1$) is given by
$$ N_1(\lambda)= \left\lceil\log_\lambda \left( \frac{3-\lambda-\sqrt{3 (1-\lambda ) (3+\lambda )}}{2\lambda} \right) \right\rceil-1, $$
Notice that $h_1(\lambda)=1$ and $N_1(\lambda)\nearrow\infty$ as $\lambda\to1$.
In Figure~\ref{fig:card_per_orb_sym} we plot the graph of $N_1(\lambda)$, where we can see for each $\lambda\in (0,1)$ the corresponding number of periodic orbits.

\begin{figure}[h]
\centering{\includegraphics[width=8cm]{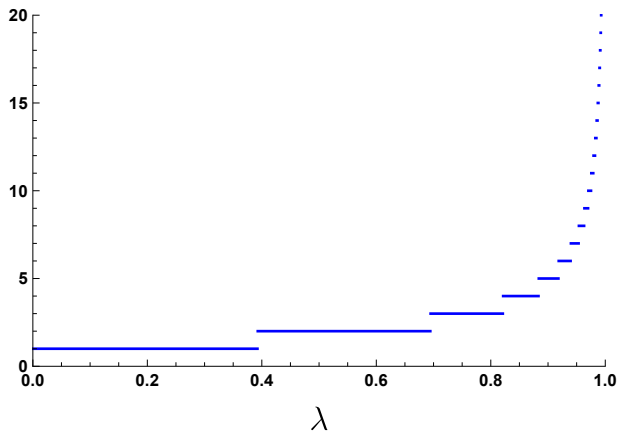}}
\caption{\footnotesize{Graph of $N_1(\lambda)$. The sequence $\{\lambda_n\}_{n\geq1}$ of discontinuities of $N_1(\lambda)$ is defined by the equation $r_1(\lambda_n )=\lambda_n^{n+2}$, $n\in\Nn$, which can be solved to give an approximate value of $\lambda_n$, e.g., $\lambda_1 \approx 0.39265$ and $\lambda_2 \approx 0.69461$. }}
\label{fig:card_per_orb_sym}
\end{figure}

For instance, when $\lambda =\frac45$ we have that $N_1(\frac45)=3$ (see Figure~\ref{fig:card_per_orb_sym} and Figure~\ref{fig:sym_08}).
In Figure~\ref{fig:basins_sym} we present the basins of attraction of the corresponding periodic orbits for the symmetric case ($\alpha =1$) for some values of the parameter $\lambda$, namely for
$\lambda =\frac45$ (where $N_1(\frac45)=3$), for $\lambda =\frac{5}{6}$ (where $N_1(\frac{5}{6})=4$),
for $\lambda =\frac{25}{28}$ (where $N_1(\frac{25}{28})=5$), and
for $\lambda =\frac{25}{27}$ (where $N_1(\frac{25}{27})=6$).

\begin{figure}
    \centering
    \begin{subfigure}[t]{0.4\textwidth}\centering
        \includegraphics[width=5cm]{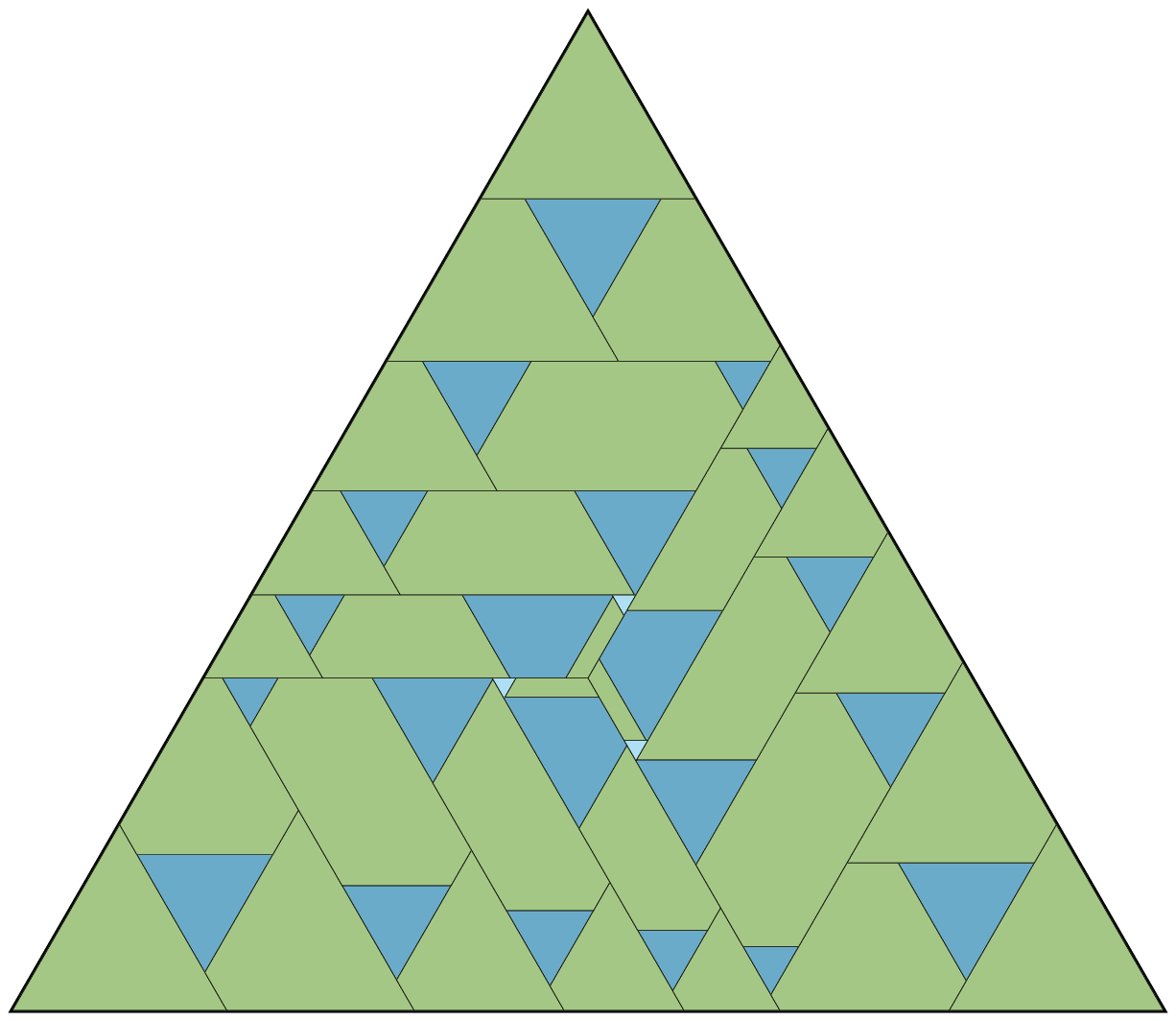}
        \caption[Short caption]{\centering $\lambda =\frac45=0.8$
        
        ($3$ periodic orbits) }
        \label{fig:sym_08}
    \end{subfigure}
    \quad \quad
    ~ 
    \begin{subfigure}[t]{0.4\textwidth}\centering
        \includegraphics[width=5cm]{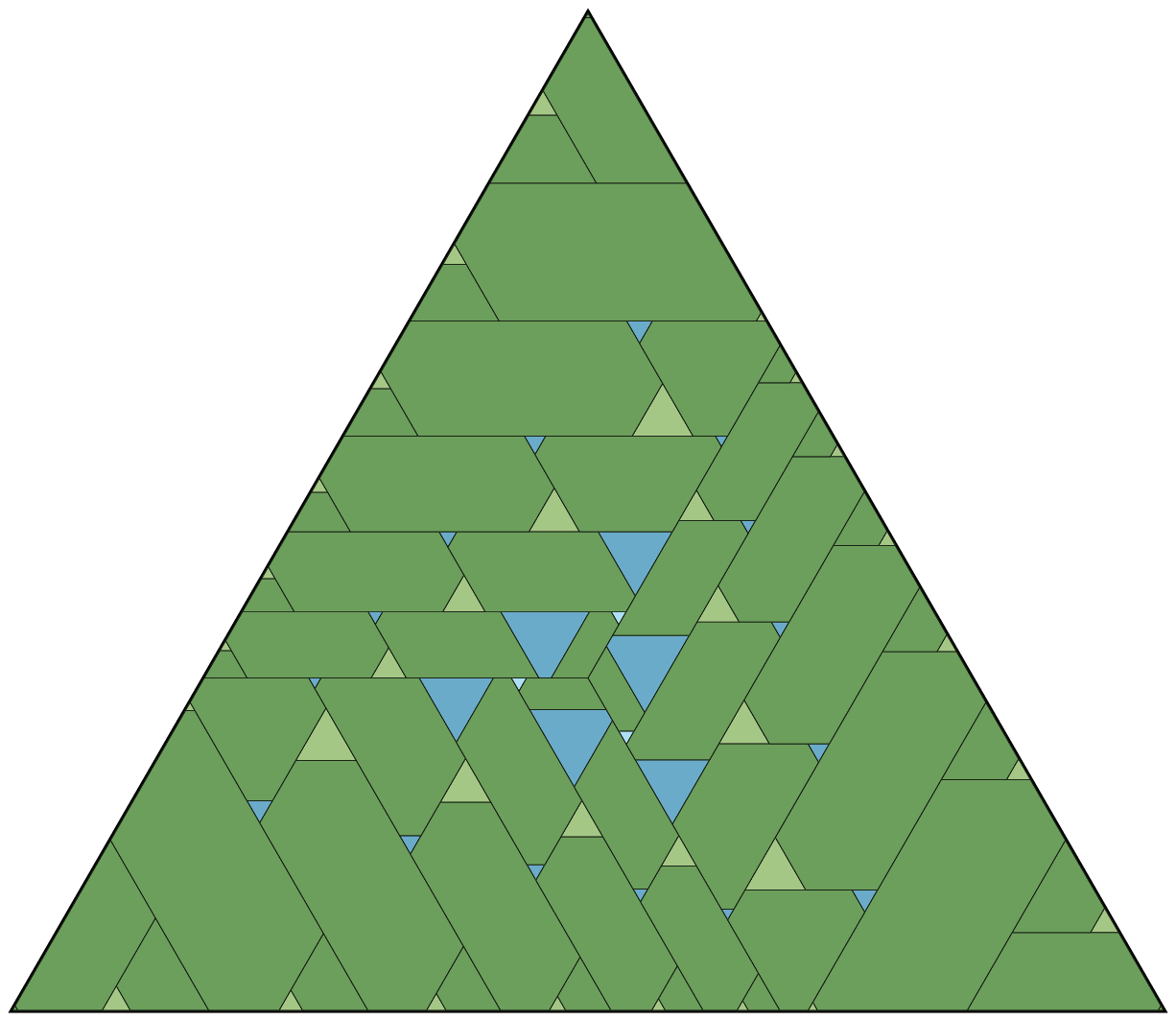}
        \caption[Short caption]{\centering $\lambda =\frac{5}{6}\approx 0.832$
        
        ($4$ periodic orbits)}
        \label{fig:sym_0832}
    \end{subfigure}
   
    \begin{subfigure}[t]{0.4\textwidth}\centering
        \includegraphics[width=5cm]{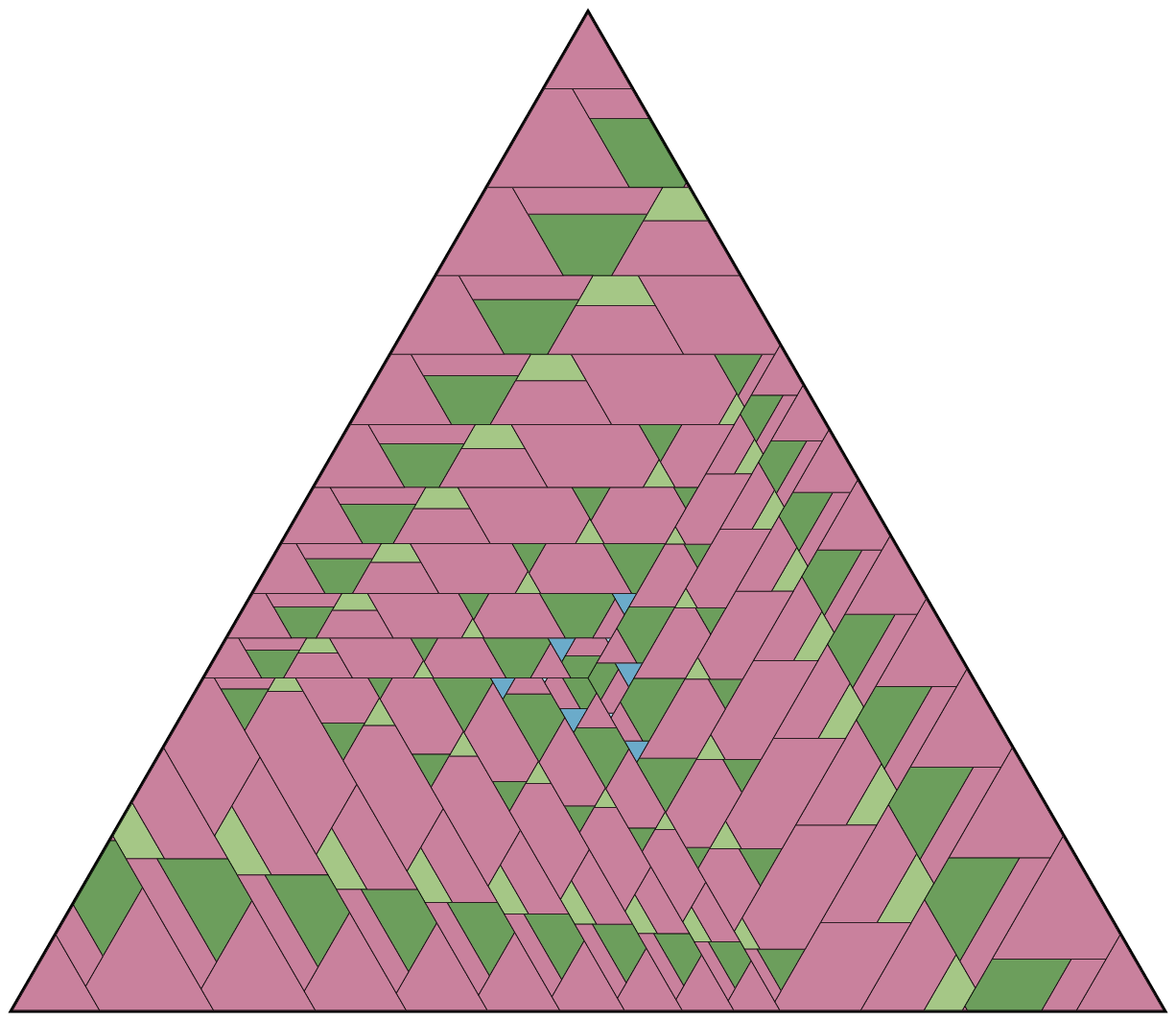}
        \caption[Short caption]{\centering $\lambda =\frac{25}{28}\approx 0.893$
        
        ($5$ periodic orbits)}
        \label{fig:sym_0893}
    \end{subfigure}
    \quad \quad
    ~
 \begin{subfigure}[t]{0.4\textwidth}\centering
        \includegraphics[width=5cm]{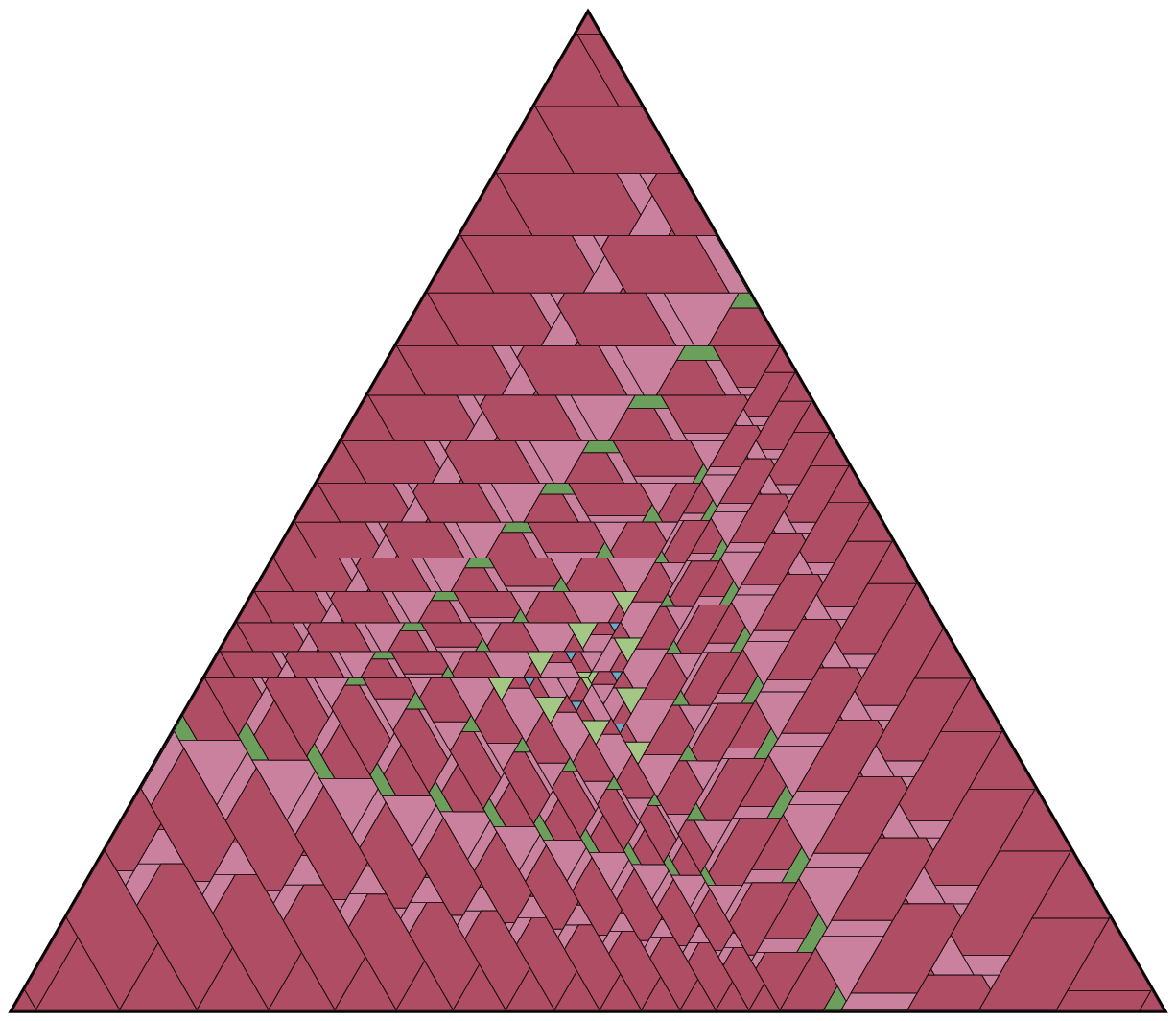}
        \caption[Short caption]{\centering $\lambda =\frac{25}{27}\approx 0.926$
        
        ($6$ periodic orbits)}
        \label{fig:sym_0926}
    \end{subfigure}
    \caption{\footnotesize{Basins of attraction of the corresponding periodic orbits for some values of the parameter $\lambda$ in the symmetric case ($\alpha =1$), where the color \mychar{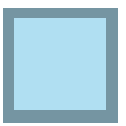} is the basin of attraction of the periodic orbit with period $3$, color \mychar{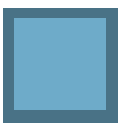} for period $6$,
\mychar{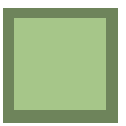} for $9$, \mychar{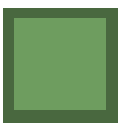} for $12$,
\mychar{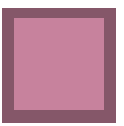} for $15$, and \mychar{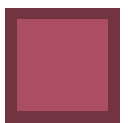} for $18$.}}\label{fig:basins_sym}
\end{figure}

In the non-symmetric favourable case $(\alpha >1)$, by $(4)$ of Lemma~\ref{lem:ht}, the plot of $N_\alpha$ is similar to $N_1$ but with a finite number $\left\lceil\frac{3\alpha}{\alpha-1}\right\rceil-1$ of plateaus (whence discontinuities). 
In the non-symmetric unfavourable case $(\alpha <1)$, by $(5)$ of Lemma~\ref{lem:ht}, the number of periodic orbits $N_\alpha(\lambda)$ oscillates in the limit $\lambda\to1$ between the values
$$ \left\lceil\frac{1+\alpha+\alpha^2}{1-\alpha}\right\rceil-1 \quad\textrm{and}\quad \left\lceil\frac{1+\alpha+\alpha^2}{1-\alpha}\right\rceil.$$
The behaviour of $N_\alpha(\lambda)$ is depicted in Figure~\ref{fig:card_per_orb_non_sym}. For instance, when $\alpha =\frac{1}{2}$, the number of periodic orbits oscillates between $3$ and $4$ as $\lambda\to 1$. In Figure~\ref{fig:basins_non_sym} we present the basins of attraction of the  periodic orbits for the unfavourable game with $\alpha =\frac{1}{2}$ for the values of the parameter $\lambda\in\left\{\frac{100}{113},\frac{25}{28},\frac{100}{111},\frac{10}{11}\right\}$. In all those examples the number of periodic orbits oscillates between $2$ and $3$ since the first $\lambda$ for which $N_{\frac12}(\lambda)=4$ is approximately $0.953613$. However, computing and colouring the basins of attraction of the periodic orbits (the least period being 45) demands for a greater computational effort compared to the examples computed in Figure~\ref{fig:basins_non_sym}. Indeed, the number of elements in the partition forming the basins of attraction of the periodic orbits is 2133.

\begin{figure}[h]
\centering{\includegraphics[width=9cm]{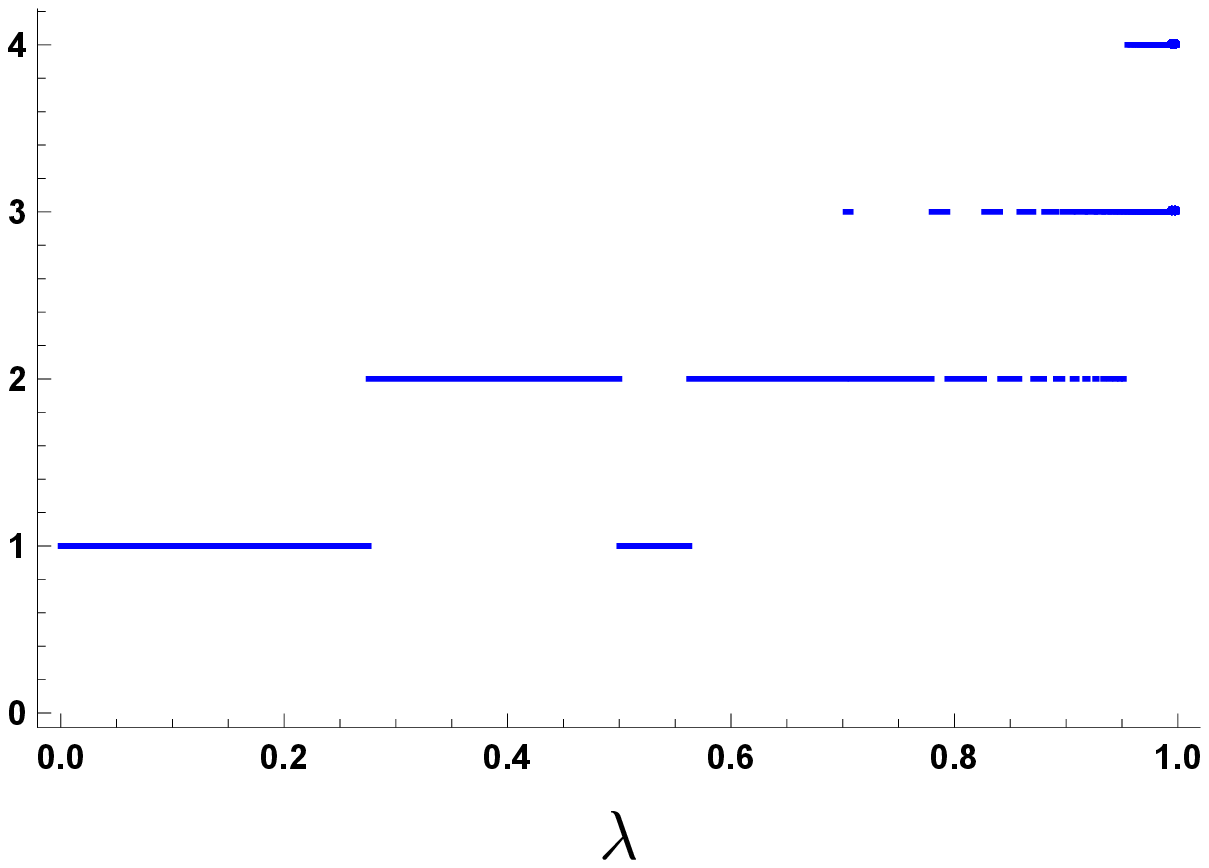}}
\caption{\footnotesize{Graph of $N_{\frac{1}{2}}(\lambda)$.}}
\label{fig:card_per_orb_non_sym}
\end{figure}



\begin{figure}
    \centering
    \begin{subfigure}[t]{0.4\textwidth}\centering
        \includegraphics[width=5cm]{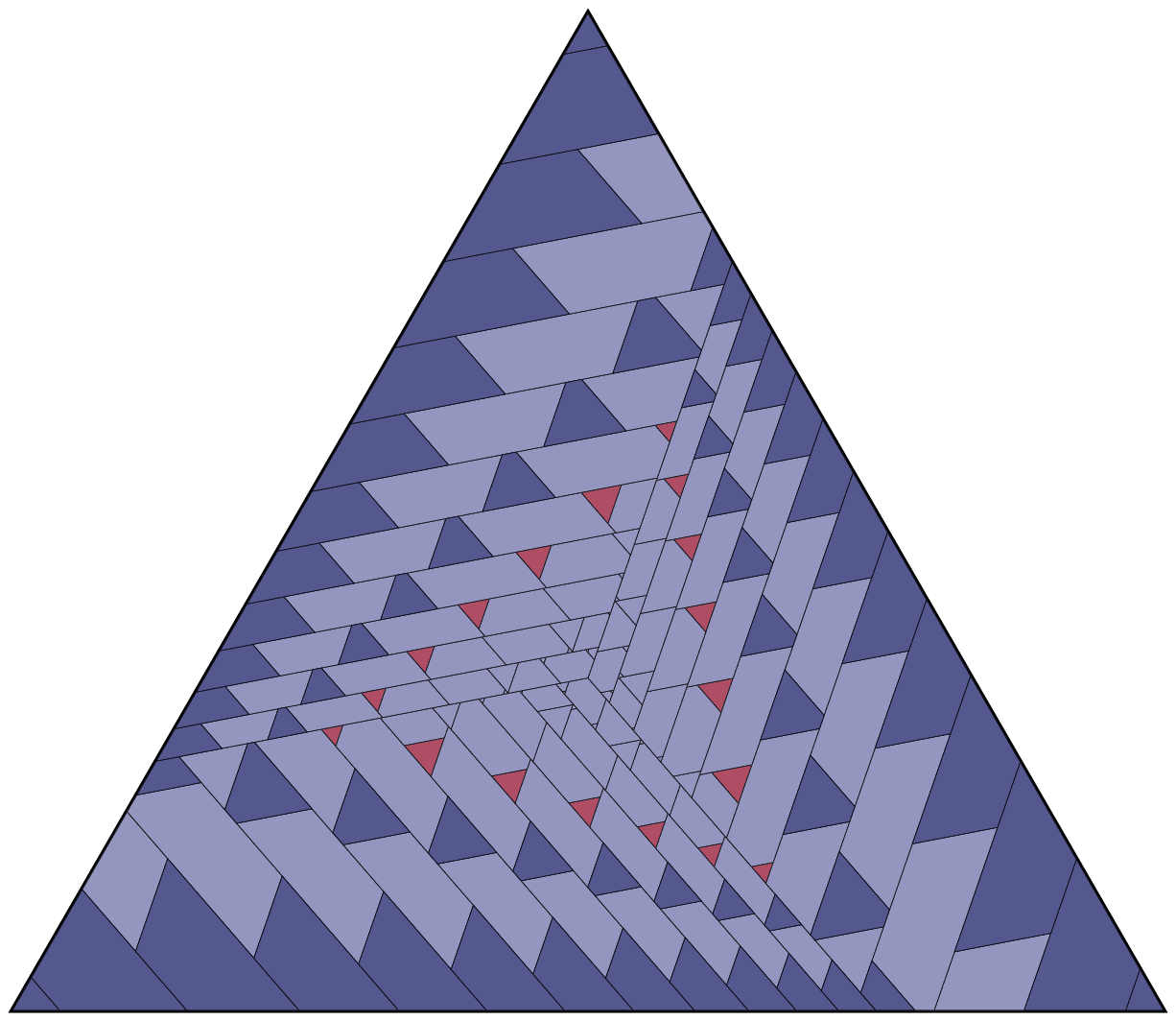}
        \caption[Short caption]{\centering $\lambda =\frac{100}{113}\approx 0.885$
        
        ($3$ periodic orbits) }
        \label{fig:non_sym_0885}
    \end{subfigure}
    \quad \quad
    ~ 
    \begin{subfigure}[t]{0.4\textwidth}\centering
        \includegraphics[width=5cm]{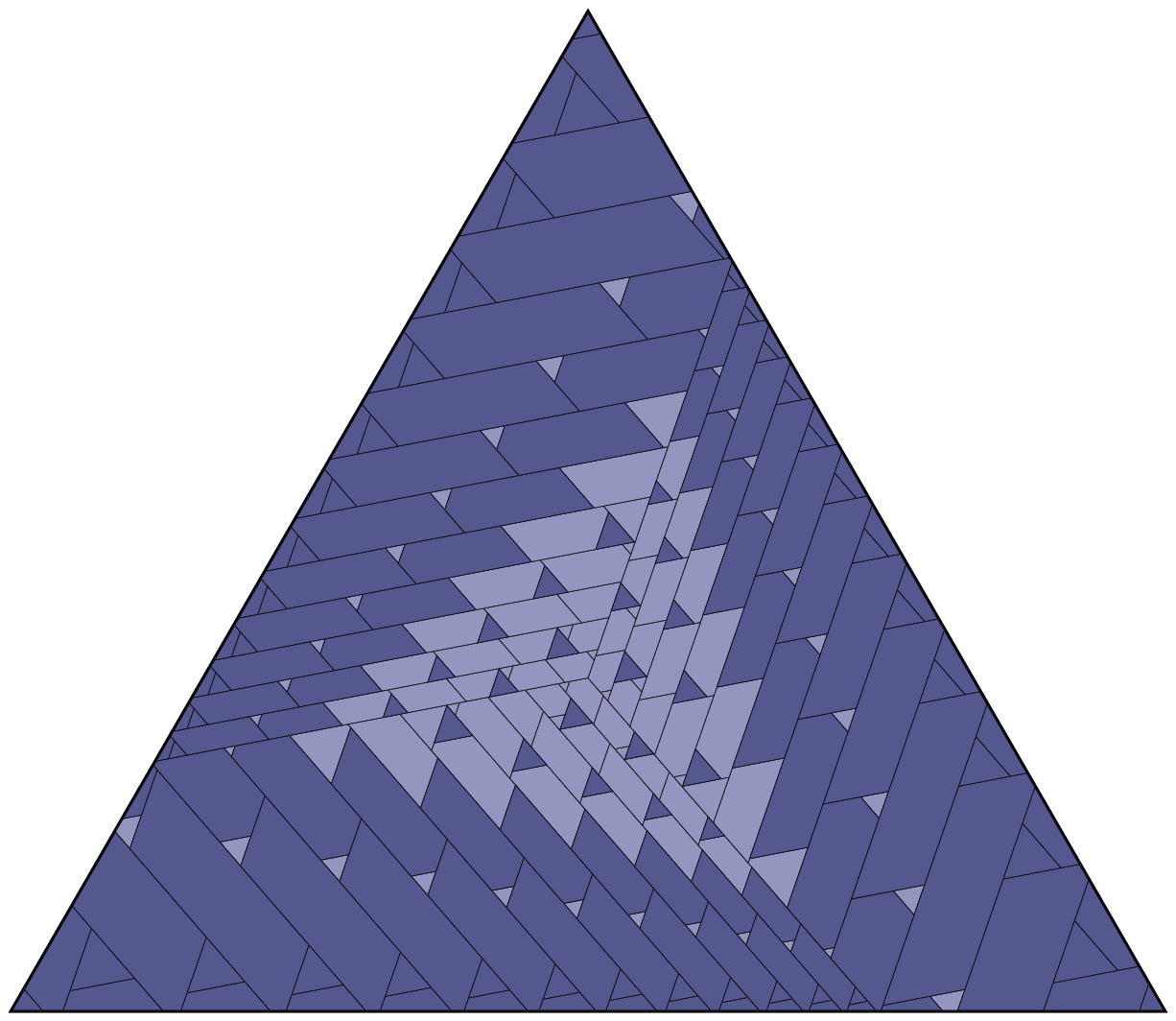}
        \caption[Short caption]{\centering $\lambda =\frac{25}{28}\approx 0.893$
        
        ($2$ periodic orbits)}
        \label{fig:non_sym_0893}
    \end{subfigure}
   
    \begin{subfigure}[t]{0.4\textwidth}\centering
        \includegraphics[width=5cm]{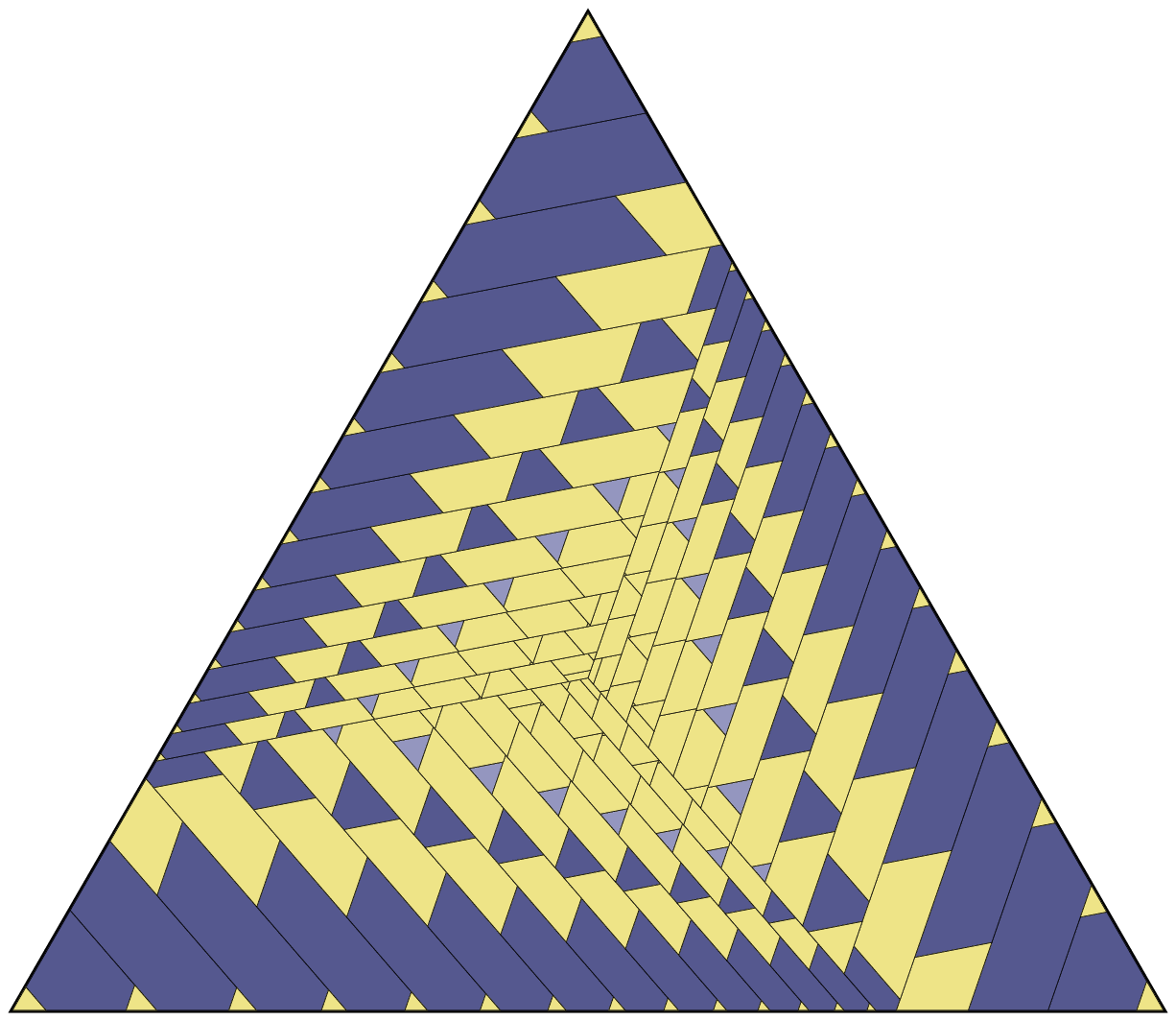}
        \caption[Short caption]{\centering $\lambda =\frac{100}{111}\approx 0.901$
        
        ($3$ periodic orbits)}
        \label{fig:non_sym_0901}
    \end{subfigure}
    \quad \quad
    ~
 \begin{subfigure}[t]{0.4\textwidth}\centering
        \includegraphics[width=5cm]{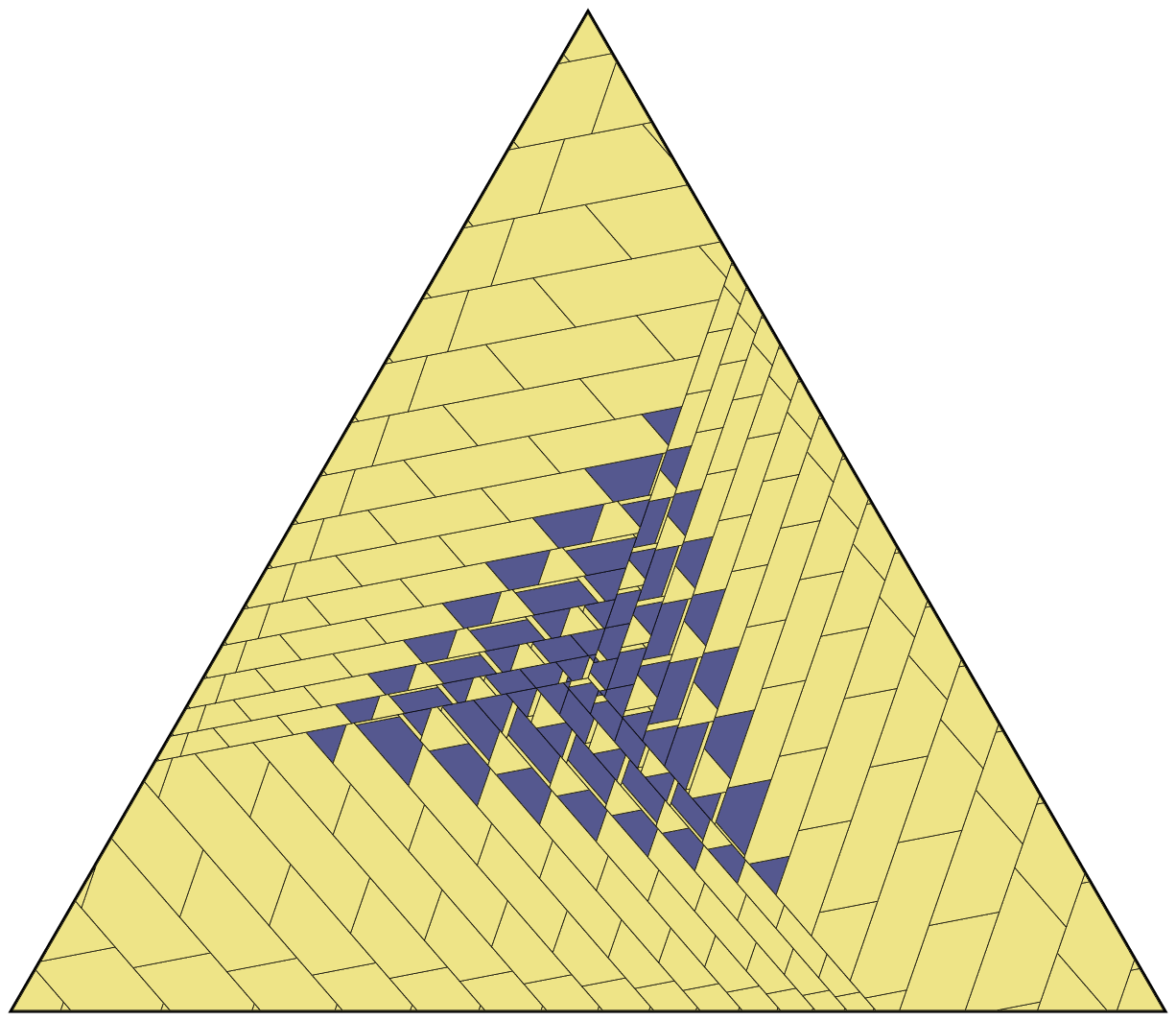}
        \caption[Short caption]{\centering $\lambda =\frac{10}{11}\approx 0.909$
        
        ($2$ periodic orbits)}
        \label{fig:non_sym_0909}
    \end{subfigure}
    \caption{\footnotesize{Basins of attraction of the corresponding periodic orbits for some values of the parameter
$\lambda$ in the non symmetric case with $a=\frac{1}{2}$ and $b=1$ ($\alpha =\frac{1}{2}$),
where the color \mychar{06_color_per_18.eps} is the basin of attraction of the periodic orbit with period $18$, color \mychar{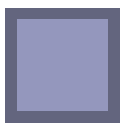} for period $21$, \mychar{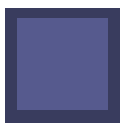} for $24$, and \mychar{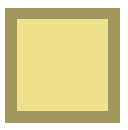} for $27$.}}\label{fig:basins_non_sym}
\end{figure}

\section{Conclusion and open problems}\label{sec:Conclus}

In this paper we show that the dynamics of the discretized best-response function for the RPS game with payoff matrix~\eqref{RPS_payoff_matrix} is asymptotically periodic.
In fact we prove that its attractor is finite and periodic in the sense that any strategy converges to a periodic strategy and there are at most a finite number of them.
Moreover, we fully characterize this dynamics as a function of the parameters $\alpha=a/b$ and $\lambda$ (related to the step of the discretization). Namely, we determine the exact number of periodic strategies, their period, and location, as a function of the parameters $\alpha$ and $\lambda$. We believe that our methods can be applied to study the discretized best-response dynamics of other games, such as $2\times 2$ bimatrix games.

The discretized best-response dynamics of the RPS game belongs to a special class of dynamical systems with discontinuities called planar piecewise affine contractions. In \cite{BD09} it is proved that a generic planar piecewise affine contraction is asymptotically periodic, i.e., the attractor is finite and periodic for almost every choice of the branch fixed points of the affine contractions. In our case, the branch fixed points are pure strategies and thus cannot be used as a varying parameter. The dynamics of other classes of piecewise contractions has been recently investigated in \cite{G18,NPR18,CGMU16,DGG15,NP15,NPR14}. 
 
A paradigmatic one-dimensional piecewise contraction that illustrates the typical dynamical behaviour that one can observe is the contracted rotation $f:[0,1)\to[0,1)$ defined by
$$
f(x)=\lambda x+b\pmod{1}
$$
with parameters $\lambda\in(0,1)$ and $b\in(0,1)$ such that $\lambda+b>1$ \cite{LN2018}. The map $f$ can be seen as a circle map with a single discontinuity point at $x=0$. Then, associated to $f$, one can define the rotation number $\rho(f)$ which describes the asymptotic rate of rotation of $f$. Depending on the arithmetical properties of $\rho(f)$, the map $f$ can display distinct dynamical phenomenon. When $\rho(f)$ is rational, $f$ has a unique periodic orbit which is a global attractor. On the other hand, when $\rho(f)$ is irrational, $f$ is quasi-periodic, in the sense that the global attractor is a Cantor set. 
It would be interesting to construct a game for which the discretized best-response dynamics is quasi-periodic.

Based on these works we conjecture that for any discretization step and for almost every payoff matrix $A$, the corresponding discretized best-response dynamics is asymptotically periodic.

Another interesting line of research would be to consider variable contraction rates
$$
x_{n+1}=\lambda_{n} x_n+(1-\lambda_{n})BR(x_n)
$$
where $\lambda_n\in(0,1)$ for every $n\geq0$ which are either deterministic or random. In the deterministic case, the contraction rates $\lambda_n$ could be generated by a dynamical system $\lambda_{n+1}=g(\lambda_n)$ where $g:[0,1]\to[0,1]$ is an interval map which models how the fraction of the population that chooses to play the same strategy varies over time. In the case of the original Brown's fictitious play, $\lambda_n=1-\frac{1}{n}$ which is the orbit of $g(\lambda)=\frac{1}{2-\lambda}$ starting at $\lambda_1=0$. Notice that the point $\lambda=1$ is a neutral fixed point of $g$. Alternatively, $\{\lambda_n\}_{n\geq0}$ could be an ergodic Markov chain and one could study the existence of ergodic stationary measures for the random discretized best-response dynamics.

\section*{Acknowledgments}
The authors were supported by FCT - Funda\c{c}\~{a}o para a Ci\^{e}ncia e a Tecnologia,
under the project CEMAPRE - UID/MULTI/00491/2019 through national funds. The authors also wish to express their gratitude to Jo\~ao Lopes Dias for stimulating conversations.


\bibliographystyle{amsplain}


\appendix
\section{Monotonicity lemmas}\label{sec:MonotLems}
Recall that
$$
m=m_\alpha(\lambda):=\min\{k\in\Nn\colon \lambda^k<\alpha\}.
$$
and that $\hat{B}$ is the set of regular strategies in $B$, i.e., $\hat{B}=B\cap \hat{\Delta}$.  

\begin{lem}\label{lem:mon1}
Let $x\in \hat{B}$. If $n(x)\geq m$, then $n(P(x))\geq m$.
\end{lem}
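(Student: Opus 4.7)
The plan is to translate the whole statement into the single linear coordinate $y(x):=S^2(u_\alpha)\cdot x$, in which the layers $B_k$ become intervals, and then verify the resulting one-variable inequality by a direct affine computation.

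By Lemma~\ref{lem:Bk and P}, a regular point $x\in\hat{B}$ satisfies $n(x)\geq m$ iff $y(x)>b_{m-1}$, and likewise for $P(x)$. Thus the task reduces to showing: if $x\in B_k$ with $k\geq m$, then $y(P(x))>b_{m-1}$. Using the formula $P|_{B_k}(x)=\lambda^k S(x)+\lambda^{k-1}(1-\lambda)e_1+(1-\lambda^{k-1})e_2$, the identity $u\cdot S(v)=S^2(u)\cdot v$ (which follows from $S^3=\mathrm{id}$), and the values $S^2(u_\alpha)\cdot e_1=0$, $S^2(u_\alpha)\cdot e_2=\alpha+2$, one obtains
$$
y(P(x))=\lambda^k\,S(u_\alpha)\cdot x+(1-\lambda^{k-1})(\alpha+2).
$$
A direct check shows $u_\alpha+S(u_\alpha)+S^2(u_\alpha)=(3,3,3)$, so on $\Delta$ we have $S(u_\alpha)\cdot x=3-u_\alpha\cdot x-y(x)$, giving
$$
y(P(x))=\lambda^k\bigl(3-u_\alpha\cdot x-y(x)\bigr)+(1-\lambda^{k-1})(\alpha+2).
$$

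This expression is strictly decreasing in both $u_\alpha\cdot x$ and $y(x)$, so the sharp upper bounds $u_\alpha\cdot x<\alpha(\lambda^{-1}-1)+1$ (since $x\in B$) and $y(x)<b_k$ (since $x\in B_k$) yield, after routine simplification, $y(P(x))>\alpha+2-\lambda^{k-1}-\alpha\lambda^{-1}$. The required inequality $y(P(x))>b_{m-1}$ then reduces, using the formula for $b_{m-1}$, to the purely algebraic statement $\lambda^k+\alpha\lambda^{1-m}\leq\alpha+1$. This splits into two easy cases: if $\alpha\leq 1$, the definition of $m$ gives $\lambda^{m-1}\geq\alpha$ (hence $\alpha\lambda^{1-m}\leq 1$) while $k\geq m$ forces $\lambda^k<\alpha$; if $\alpha>1$ then $m=1$, so $\alpha\lambda^{1-m}=\alpha$ and $\lambda^k<1$. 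In both cases the inequality holds strictly. The only real delicacy in this plan is the bookkeeping of the cyclic symmetry $S$ and the indices on $u_\alpha$ in the reduction to a single inequality in $y$; the final case split at $\alpha=1$ is routine.
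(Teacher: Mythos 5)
Your proof is correct and follows essentially the same route as the paper's: both reduce to the coordinate $S^2(u_\alpha)\cdot x$, use $S(u_\alpha)\cdot x=3-u_\alpha\cdot x-S^2(u_\alpha)\cdot x$ with the upper bounds from $x\in B$ and $x\in B_k$ to obtain $y(P(x))>\alpha+2-\lambda^{k-1}-\alpha\lambda^{-1}$, and then verify the same algebraic inequality $\lambda^k+\alpha\lambda^{1-m}\leq\alpha+1$ via $\lambda^k\leq\lambda^m<\alpha$ and $\lambda^{m-1}\geq\alpha$. The only cosmetic difference is that the paper dismisses the case $m=1$ as trivial at the outset, while you absorb it into the case split $\alpha\leq1$ versus $\alpha>1$, which works equally well.
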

\begin{proof}
We suppose that $m\geq2$. Otherwise there is nothing to prove. Let $x\in B_k$ for some $k\geq m$. Then
$$
S^2(u_\alpha)\cdot P(x)=\lambda^{k} S(u_\alpha)\cdot x+(1-\lambda^{k-1})(2+\alpha).
$$
Since $x\in B_k$, we have
\begin{align*}
S(u_\alpha)\cdot x &= 3-u_\alpha\cdot x-S^2(u_\alpha)\cdot x\\
&> 3-(\lambda^{-1}\alpha-\alpha+1)-b_k.
\end{align*}
Hence,
$$
S^2(u_\alpha)\cdot P(x)> 2+\alpha -\lambda^{k-1}-\alpha\lambda^{-1}.
$$
Therefore, to prove that $n(P(x))\geq m$, it is sufficient to show that 
$$
g(\lambda):=2+\alpha -\lambda^{k-1}-\alpha\lambda^{-1}-b_{m-1}>0.
$$
Notice that
$$
g(\lambda)=\lambda^{-1}(\alpha+1-\lambda^{k}-\lambda^{-m+1}\alpha).
$$
Since $\lambda^m< \alpha$ and $\lambda^k\leq\lambda^m$ we get,
$$
g(\lambda)>\lambda^{-1}(1-\lambda^{-m+1}\alpha).
$$
Again, by the definition of $m$, we have $\lambda^{m-1}\geq\alpha$. This shows that $g(\lambda)>0$ as we wanted to prove.
\end{proof}
\begin{lem}\label{lem:mon2}
Let $x\in \hat{B}$ such that $n(P(x))\geq m$. If $n(P(x))\le n(x)$, then $n(P^2(x))\le n(P(x))$.
\end{lem}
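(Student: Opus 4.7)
The plan is to compute $S^2(u_\alpha)\cdot P^2(x)$ explicitly and compare it with $b_j$, where $j:=n(P(x))$. By Lemma~\ref{lem:Bk and P} together with the monotonicity of $k\mapsto b_k$, the conclusion $n(P^2(x))\leq j$ is equivalent to the strict inequality $S^2(u_\alpha)\cdot P^2(x)<b_j$; this is what I aim to establish, under the hypotheses $k:=n(x)\geq j\geq m$.

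First I would apply Lemma~\ref{lem:Bk and P} twice (once to $x\in B_k$ and once to $P(x)\in B_j$) in order to write $P^2(x)$ as an affine function of $x$. Using that $S$ is a coordinate permutation, hence an isometry with $S(u_\alpha)\cdot S(y)=u_\alpha\cdot y$, together with the elementary inner products $S^2(u_\alpha)\cdot e_1=0$, $S^2(u_\alpha)\cdot e_2=\alpha+2$, $S(u_\alpha)\cdot e_1=1-\alpha$, $S(u_\alpha)\cdot e_2=0$, one arrives at
\[
S^2(u_\alpha)\cdot P^2(x)=\lambda^{j+k}\,u_\alpha\cdot x+\lambda^{j+k-1}(1-\lambda)(1-\alpha)+(1-\lambda^{j-1})(\alpha+2).
\]
The next step is to exploit that $x\in B$, which supplies the strict bound $u_\alpha\cdot x<\alpha(\lambda^{-1}-1)+1$. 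Substituting and simplifying causes all the $\lambda^{j+k}$ contributions to cancel, yielding the compact strict upper bound
\[
S^2(u_\alpha)\cdot P^2(x)<\lambda^{j+k-1}+(1-\lambda^{j-1})(\alpha+2).
\]

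It then remains to show that this right-hand side does not exceed $b_j=\lambda^{-j-1}\alpha-\lambda^{-1}(2\alpha+1)+\alpha+2$. After rearranging and clearing denominators (multiplying through by $\lambda^{j+1}>0$), the required inequality reduces to the polynomial estimate
\[
\lambda^{2j+k}-(\alpha+2)\lambda^{2j}+(2\alpha+1)\lambda^j-\alpha<0.
\]
This is where both hypotheses enter. From $k\geq j$ I would bound $\lambda^{2j+k}\leq\lambda^{3j}$, reducing the task to the same inequality with $\lambda^{3j}$ in place of $\lambda^{2j+k}$. Setting $y:=\lambda^j$, this becomes $y^3-(\alpha+2)y^2+(2\alpha+1)y-\alpha<0$, and the key algebraic observation is that this cubic factors as $(y-1)^2(y-\alpha)$, which one checks by verifying that $y=1$ is a double root and $y=\alpha$ a simple root. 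The hypothesis $j\geq m$ then supplies $\lambda^j\leq\lambda^m<\alpha$ by definition of $m$, so the factor $(y-\alpha)$ is strictly negative while $(y-1)^2>0$ (recall $\lambda<1$), giving the desired sign.

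The main obstacle I anticipate is the initial computation of $S^2(u_\alpha)\cdot P^2(x)$: two compositions of the branch-map formula together with repeated use of the isometry property of $S$ make this step error-prone. Once the compact upper bound is in place, the factorization of the cubic is the decisive insight---after spotting that $y=1$ is a double root it is immediate---and the two hypotheses then partition cleanly, with $k\geq j$ controlling the cubic bound and $j\geq m$ controlling its sign.
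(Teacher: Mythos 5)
Your proposal is correct and follows essentially the same route as the paper's proof: reduce the claim to the inequality $S^2(u_\alpha)\cdot P^2(x)<b_j$, compute $S^2(u_\alpha)\cdot P^2(x)=\lambda^{j+k}\,u_\alpha\cdot x+\lambda^{j+k-1}(1-\lambda)(1-\alpha)+(1-\lambda^{j-1})(\alpha+2)$, bound $u_\alpha\cdot x<\alpha(\lambda^{-1}-1)+1$ using $x\in B$, and after multiplying by $\lambda^{j+1}$ and using $k\geq j$ reduce everything to the sign of $(\lambda^j-1)^2(\lambda^j-\alpha)$, which is negative precisely because $j\geq m$ forces $\lambda^j<\alpha$. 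All your intermediate formulas (including the simplified bound $\lambda^{j+k-1}+(1-\lambda^{j-1})(\alpha+2)$, which is the paper's $Q_{j,k,\alpha}(\lambda)$ in expanded form, and the cubic factorization) check out.
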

\begin{proof}
We want to prove that given $k\in\Nn$, if $x\in B_k$ and $P(x)\in B_j$ for some $j\le k$, then
$P^2(x)\in B_i$ for some $i\le j$,
for any $\alpha>0$ and $\lambda\in(0,1)$ satisfying $\lambda^j< \alpha$.

So its enough to see that
$$
S^2(u_\alpha)\cdot P^2(x)<b_j=\lambda^{-j-1}\alpha-\lambda^{-1}(\alpha-1+(1-\lambda)(\alpha+2)).
$$
Since $x\in B_k$,
$$
P(x)=\lambda^k S(x)+\lambda^{k-1}(1-\lambda)e_1+(1-\lambda^{k-1})e_2
$$
and $P(x)\in B_j$ implies that
\begin{align*}
P^2(x) &=\lambda^j S(P(x))+\lambda^{j-1}(1-\lambda)e_1+(1-\lambda^{j-1})e_2 \\
			&=\lambda^{j+k} S^2(x)+\lambda^{j+k-1}(1-\lambda)e_3+\lambda^j(1-\lambda^{k-1})e_1+\lambda^{j-1}(1-\lambda)e_1\\
			&\qquad  +(1-\lambda^{j-1})e_2.
\end{align*}
Because $S^2(u_\alpha)=(0,2+\alpha,1-\alpha)$, we have that
$$
S^2(u_\alpha)\cdot P^2(x) = \lambda^{j+k} u_\alpha\cdot x-\lambda^{j+k-1}(1-\lambda)(\alpha-1)+(1-\lambda^{j-1})(2+\alpha).
$$
But $u_\alpha\cdot x<\alpha\left(\lambda^{-1}-1\right)+1$, thus
$$
S^2(u_\alpha)\cdot P^2(x)<Q_{j,k,\alpha}(\lambda),
$$
where
$$
Q_{j,k,\alpha}(\lambda):=\lambda^{j+k} \left( \lambda^{-1}\alpha-\alpha+1 \right) -\lambda^{j+k-1}(1-\lambda)(\alpha-1)+(1-\lambda^{j-1})(2+\alpha).
$$
Now it is easy to see that $Q_{j,k,\alpha}(\lambda)<b_j$ for every $\alpha>0$ and $\lambda\in(0,1)$ satisfying $\lambda^j<\alpha $. Indeed, 
\begin{align*}
\lambda^{j+1}( Q_{j,k,\alpha}(\lambda)-b_j)&=\lambda ^j \left(\lambda ^{j+k}-2 \lambda ^j+1\right)-\alpha  \left(\lambda ^j-1\right)^2\\
			&\le\lambda ^j \left(\lambda ^{2j}-2 \lambda ^j+1\right)-\alpha  \left(\lambda ^j-1\right)^2\\
			&= (\lambda^j-1)^2(\lambda^j-\alpha)<0.
\end{align*}
\end{proof}

\begin{lem}\label{lem:mon3}
Let $x\in \hat{B}$ such that $n(P(x))<m$. If $n(P(x))\ge n(x)$, then $n(P^2(x))\ge n(P(x))$.
\end{lem}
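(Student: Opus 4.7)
The plan is to mirror closely the strategy of Lemma~\ref{lem:mon2}, but now working with the \emph{lower} (rather than upper) bound on $u_\alpha\cdot x$ and comparing against $b_{j-1}$ (rather than $b_j$). Write $x\in B_k$ and $P(x)\in B_j$; the hypotheses translate into $k\le j$ and $\lambda^j\ge\alpha$ (the latter because $j=n(P(x))<m$). Since the sequence $\{b_\ell\}$ is strictly increasing in $\ell$, the desired conclusion $n(P^2(x))\ge j$ is equivalent to
$$S^2(u_\alpha)\cdot P^2(x)>b_{j-1}.$$

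I would then recycle the formula for $S^2(u_\alpha)\cdot P^2(x)$ derived inside the proof of Lemma~\ref{lem:mon2}, namely
$$S^2(u_\alpha)\cdot P^2(x)=\lambda^{j+k}\,u_\alpha\cdot x-\lambda^{j+k-1}(1-\lambda)(\alpha-1)+(1-\lambda^{j-1})(2+\alpha).$$
Applying $u_\alpha\cdot x>1$ (which holds since $x\in B$) yields the lower bound
$$S^2(u_\alpha)\cdot P^2(x)>R_{j,k,\alpha}(\lambda):=\lambda^{j+k}-\lambda^{j+k-1}(1-\lambda)(\alpha-1)+(1-\lambda^{j-1})(2+\alpha),$$
so it suffices to prove $R_{j,k,\alpha}(\lambda)\ge b_{j-1}$ whenever $\alpha>0$, $\lambda\in(0,1)$, $k\le j$ and $\lambda^j\ge\alpha$.

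The decisive step is an algebraic identity. After multiplying $R_{j,k,\alpha}(\lambda)-b_{j-1}$ by $\lambda^{j+1}$ and regrouping terms so as to isolate each structural hypothesis, I expect to obtain
$$\lambda^{j+1}\bigl(R_{j,k,\alpha}(\lambda)-b_{j-1}\bigr)=\alpha(1-\lambda)\bigl(1-\lambda^{2j+k}\bigr)+\lambda^{2j}\bigl(\lambda^k-\lambda^j\bigr)+(\lambda^j-\alpha)\bigl(1-\lambda^j\bigr)^2.$$
In this decomposition each summand is nonnegative: the first is strictly positive because $0<\lambda<1$ and $\alpha>0$, the second is nonnegative because $k\le j$, and the third is nonnegative because $\lambda^j\ge\alpha$. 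The whole expression is therefore strictly positive, which gives $R_{j,k,\alpha}(\lambda)>b_{j-1}$ and proves the lemma.

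The main obstacle is pinpointing this three-term factorization: a naive expansion of $R_{j,k,\alpha}(\lambda)-b_{j-1}$ produces a polynomial in $\lambda$ with six unsigned monomials, and it is not a priori clear that it can be split so that each summand is controlled by exactly one of the hypotheses $\alpha>0$, $k\le j$, and $\lambda^j\ge\alpha$. Once the decomposition is in place the positivity argument, and hence the monotonicity claim, is automatic.
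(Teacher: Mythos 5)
Your proposal is correct and follows the paper's proof essentially step for step: the same reduction of $n(P^2(x))\ge j$ to the inequality $S^2(u_\alpha)\cdot P^2(x)>b_{j-1}$, the same formula for $S^2(u_\alpha)\cdot P^2(x)$ recycled from Lemma~\ref{lem:mon2}, the same lower bound $u_\alpha\cdot x>1$, and your $R_{j,k,\alpha}$ is exactly the paper's $Q_{j,k,\alpha}$. Your three-term identity (which checks out exactly) is the paper's chain of estimates collapsed into one line: $\alpha(1-\lambda)\bigl(1-\lambda^{2j+k}\bigr)$ is the term the paper drops first, $\lambda^{2j}\bigl(\lambda^k-\lambda^j\bigr)$ is the slack in its bound $\lambda^{2j+k}\ge\lambda^{3j}$ (using $k\le j$), and $(\lambda^j-\alpha)\bigl(1-\lambda^j\bigr)^2$ is its final factored expression $(\lambda^j-1)^2(\lambda^j-\alpha)\ge0$.
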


\begin{proof}
The strategy of the proof is the same as that of the previous lemma.
We want to prove that given $k\in\Nn$, if $x\in B_k$ and $P(x)\in B_j$ for some $k\leq j<m$, then
$P^2(x)\in B_i$ for some $i\ge j$,
for any $\alpha>0$ and $\lambda\in(0,1)$ satisfying $\lambda^j\geq \alpha$.

It is enough to see that
$$
S^2(u_\alpha)\cdot P^2(x)> b_{j-1}=\lambda^{-j}\alpha-\lambda^{-1}(\alpha-1+(1-\lambda)(\alpha+2)).
$$
As in the proof of the previous lemma we have that
\begin{align*}
S^2(u_\alpha)\cdot P^2(x) &= \lambda^{j+k} u_\alpha\cdot x-\lambda^{j+k-1}(1-\lambda)(\alpha-1)+(1-\lambda^{j-1})(2+\alpha).
\end{align*}
But $u_\alpha\cdot x>1$, so
$$
S^2(u_\alpha)\cdot P^2(x)>Q_{j,k,\alpha}(\lambda),
$$
where
$$
Q_{j,k,\alpha}(\lambda):=\lambda^{j+k} -\lambda^{j+k-1}(1-\lambda)(\alpha-1)+(1-\lambda^{j-1})(2+\alpha).
$$
Now it is easy to see that $Q_{j,k,\alpha}(\lambda)>b_{j-1}$ for every $\alpha>0$ and $\lambda\in(0,1)$ satisfying $\lambda^j\geq \alpha $. Indeed, 

\begin{align*}
g_{j,k,\alpha}(\lambda)&:=\lambda^{j+1}(Q_{j,k,\alpha}(\lambda)-b_{j-1}) \\
		   & = \alpha\lambda^{2j+k+1}-(\alpha-1)\lambda^{2j+k}-(2+\alpha)\lambda^{2j}+(1+2\alpha)\lambda^j-\alpha\lambda\\
		    &=\alpha(1-\lambda)(1-\lambda^{2j+k})+ \lambda^{2j+k}-(2+\alpha)\lambda^{2j}+(1+2\alpha)\lambda^j-\alpha\\
		    	&>\lambda^{2j+k}-(2+\alpha)\lambda^{2j}+(1+2\alpha)\lambda^j-\alpha\\
		    	&\ge\lambda^{3j}-(2+\alpha)\lambda^{2j}+(1+2\alpha)\lambda^j-\alpha\\
			&= \lambda^{3j}-3\lambda^{2j}+3\lambda^j-1-(\alpha-1)(\lambda^{2j}-2\lambda^j+1)\\
			&= (\lambda^j-1)^3-(\alpha-1)(\lambda^j-1)^2\\
			&= (\lambda^j-1)^2\left( \lambda^j-\alpha\right) \geq0.
\end{align*}
\end{proof}

\end{document}